\newtheorem{theorem}{Theorem}[section]
\newtheorem{corollary}[theorem]{Corollary}
\newtheorem{lemma}[theorem]{Lemma}
\newtheorem{definition}{Definition}[section]
\theoremstyle{definition}
\newtheorem{remark}[theorem]{Remark}
\numberwithin{equation}{section}
\newcommand{\wred}{\textcolor{black}}
\newcommand{\leb}{{\mathscr L}}
\newcommand{\ssob}{\mathscr W}
\newcommand{\sob}{\mathscr W^1\leb^p\log^{-\alpha}\leb}
\newcommand{\Sob}[1]{\mathscr W^1\leb^p\log^{-#1}\leb}
\newcommand{\Rz}{\mathbb{R}}
\newcommand{\E}{{\fam 0 e}}
\DeclareMathOperator{\divergenza}{div}
\newcommand{\naturale}{{\mathbb N}}
\newcommand{\ee}{\varepsilon}
\newcommand{\ds}{\displaystyle}
\newcommand{\dmedint}{{}\hbox
{\vrule height 2,95pt depth -2,2pt width 6pt}\kern-0.94em }
\newcommand{\tmedint}{{}\hbox
{\vrule height 2,7pt depth -2,3pt width 5pt}\kern-8,5pt }
\newcommand{\smedint}{{}\hbox
{\vrule height 2,1pt depth -1,7 pt width 3pt}\kern-6,2pt }
\newcommand{\ssmedint}{{}\hbox
{\vrule height 1,7pt depth -1,3 pt width 3pt}\kern-6,2pt }
\newcommand{\medint}{\,{\mathchoice
{\dmedint}{\tmedint}{\smedint}{\ssmedint}}\int}
\newcommand{\implica}{\quad\Longrightarrow\quad}
\newcommand{\plapl}[2]{  {\rm div} \left(  \left| \nabla #1 \right|^{#2-2} \nabla #1 \right)  }
\newcommand{\aplapl}[3]
{  {\rm div} \left(  \left\langle 
 #1 \nabla #2, \nabla #2
\right\rangle^{ \frac {#3 - 2} 2  }
 #1 \nabla #2
\right)  }
\newcommand{\ellmatr}{A}
\newcommand{\scal}[2]{\left \langle #1 , #2 \right \rangle}
\newcommand{\RR}{\mathbb R}
\newcommand{\grandLebesgue}[2]{\leb^{#1)}\left(#2\right)}  
\def\Xint#1{\mathchoice
{\XXint\displaystyle\textstyle{#1}}%
{\XXint\textstyle\scriptstyle{#1}}%
{\XXint\scriptstyle
\scriptscriptstyle{#1}}%
{\XXint\scriptscriptstyle
\scriptscriptstyle{#1}}%
\!\int}
\def\XXint#1#2#3{{
\setbox0=\hbox{$#1{#2#3}{\int}$}
\vcenter{\hbox{$#2#3$}}\kern-.5\wd0}}
\def\dashint{\Xint-}
\newcommand{\average}[3]{ \dashint_{#1} #2 \, d#3} 
\newcommand{\averagebis}[1]{ \dashint_{#1} } 
\begin{document}

\title{Estimates for $p$-Laplace\\type equation in a limit case}

\date{\today}


\author{Fernando Farroni\and Luigi Greco\and Gioconda Moscariello  
\thanks{This   research has been  supported by 
the 2008 ERC Advanced Grant 226234 ``Analytic Techniques for
Geometric and Functional Inequalities''
and by the 2010 PRIN ``Calculus of Variations'.}
}


\maketitle

\begin{abstract}
We
study 
the
Dirichlet problem
for a 
$p$--Laplacian type  operator   
in the setting of 
the Orlicz--Zygmund space $\leb^q\log^{-\alpha}\leb(\Omega,\mathbb R^n)$,
$q >1$ and $\alpha>0$.
More precisely, our aim is to establish 
which assuptions on the parameter $\alpha>0$  lead to
existence, uniqueness of the solution and  continuity of the associated nonlinear operator.
\\
\newline
\\
\noindent  \textit{Keywords}:
Dirichlet problems, $p$--Laplace operators, existence, uniqueness, continuity, Orlicz--Sobolev spaces.
\\
\noindent  \textit{Mathematics Subject Classification} (2000): 35J60
\end{abstract}

\section{Introduction}
Let $\Omega$ be a bounded Lipschitz domain of $\mathbb R^N$, $N\geqslant 2$.
We consider the Dirichlet problem
\begin{equation}\label{A harm intro f}
\left\{
\begin{array}{rl}
&  
\divergenza \mathcal A (x,\nabla u)   = \divergenza f  	\qquad\text{in $  \Omega$},
  \\
\, \\
&   u = 0   
\qquad\text{on $\partial \Omega$}
, \\
\end{array}
\right.
\end{equation}
where $\mathcal A\colon\Omega \times \mathbb R^N \rightarrow \mathbb R^N$
is a Carath\'eodory vector field satisfying the following assumptions for a.e. $x\in \Omega$ and all $\xi,\eta \in \mathbb R^N$
\begin{align}
\label{1.2}
&
\left\langle
\mathcal A (x,\xi) , \xi   \right\rangle \geqslant a|\xi|^p
\\
\label{1.3}
&
|\mathcal A (x,\xi) - \mathcal A (x,\eta)| \leqslant b |\xi-\eta| \left( |\xi|+|\eta| \right)^{p-2}
\\
\label{1.4}
&
\left\langle
\mathcal A (x,\xi) - \mathcal A (x,\eta) , \xi-\eta   \right\rangle \geqslant  \wred{a} 
|\xi-\eta|^2 \left( |\xi|+|\eta| \right)^{p-2}
\end{align}
where 
\wred{$p \geqslant 2$, 
%
%
$0 < a \leqslant b$}. 

\medskip

Let $f=\left(f^1,f^2,\dots,f^N\right)$ be a vector field of class $\leb^s\left(\Omega,\mathbb R^N\right)$, $1\leqslant s \leqslant q$ where $q$ is the conjugate exponent of $p$, i.e. $pq=p+q$.
\begin{definition}\label{Def weak}
A function $u \in  W^ {   1 , r } _0 (\Omega)$, $p-1 \leqslant r \leqslant p$, 
is
a solution of
\eqref{A harm intro f}
if
\begin{equation}\label{weak}
\int_\Omega
\scal{\mathcal A  (x,\nabla u)  }{\nabla \varphi} 
dx
=
\int_\Omega
\scal f {\nabla \varphi} 
dx,
\end{equation}
for every $\varphi \in C^\infty_0(\Omega)$. 
\end{definition}

By a routine argument, it can be seen 
that the identity \eqref{weak} 
still holds
for functions $\varphi \in \ssob^{1, \frac {r} {r-p+1} }(\Omega)$
with compact support.
We shall refer to such
a
solution
as a distributional solution
or (as some people say)
as a very weak solution
\cite{ISLincei,Lewis}.

\medskip

We point out that, if $r<p$,
such a solution may have infinite energy, i.e. $|\nabla u|\not\in \leb^p(\Omega)$.
The
existence of a solution $u \in \ssob_0^{1,p-1}(\Omega)$ 
to problem  \eqref{A harm intro f} is obtained in \cite{BG} 
when $\divergenza f$ belongs to $\leb^1 \left(\Omega,\mathbb R^N\right)$.
It is well known that the uniqueness of solutions to \eqref{A harm intro f}
in the sense of Definition \ref{Def weak}
generally fails \cite{Serrin,AIM}.
Then, 
other possible definitions have been introduced, as 
the so--called
duality solutions   \cite{Stampacchia},  
the
approximation solutions (SOLA) \cite{BG},
the entropy solutions \cite{Murat,KL,BOG}.
%
%
Recent results for the regularity of such solutions 
are given       in     
\cite{Mingione1,Mingione2}.
However,
these ideas do not apply if one wants
to investigate
the uniqueness of a distributional solution.
At the present time the problem remains unclear, 
unless for $p=2$ \cite{B,FS}
and
$ p=N$ \cite{DHM}.
In the case $p=2$  
%
%
\wred
{
the range of exponents $r$
allowing for a comprehensive theory is known,
see
\cite{AIS,Iwaniec-Sbordone Ann Poinc}.
}
In the general case,
uniqueness is proved in the setting of
the
grand Sobolev space  
(see \cite{GIS}).

\medskip

Our  goal in
the present paper is to  study problem \eqref{A harm intro f} assuming that the datum $f$ lies in
the Orlicz--Zygmund space $\leb^q\log^{-\alpha}\leb(\Omega,\mathbb R^n)$, $\alpha>0$.
More precisely, our aim is to establish under which assuptions on the parameter $\alpha>0$
we can define a 
continuous operator
\begin{equation} \label{H}
\mathcal H : \, \leb^q\log^{-\alpha}\leb(\Omega,\mathbb R^n) \rightarrow \leb^p\log^{-\alpha}\leb(\Omega,\mathbb R^n)
\end{equation}
which carries a given vector field $f$ into the gradient field $\nabla u$.

\medskip

In the case $\alpha \leqslant 0$, in the literature there are
several
results on the continuity of the operator defined in \eqref{H}  
\cite{M Nodea,DM,IO}.
Moreover,
as a consequence of the results in \cite{FS}   and \cite{B}
and the interpolation theorem of 
\cite{BR},
when $p=2$ the operator $\mathcal H$ is 
Lipschitz
continuous for any $-\infty<\alpha<\infty$.
Actually, for $p=2$ 
and suitable
$\alpha >0$,
the existence 
for
problem \eqref{A harm intro f} is also ensured for not uniformly elliptic equations
\cite{MPP}.

\medskip

Here we consider the case $p>2$.
Our main results are the following.

\begin{theorem}\label{main main}
For each $f \in \leb^q\log^{-\alpha}\leb\left(\Omega,\mathbb R^N\right)$, $1<q<2$ and $0 < \alpha \leqslant \frac p {p-2}$, the problem \eqref{A harm intro f} admits a unique solution $u\colon   \Omega \rightarrow \mathbb R$, such that $\nabla u \in  \leb^p\log^{-\alpha}\leb(\Omega,\mathbb R^n)$. 
\wred
{There exists a constant $C>0$ depending on $n, p, \alpha, a$ and $b$ such that 
the following estimates
holds true
\begin{equation}\label{tag eqn}
\left
\|
\nabla u
\right\|^p_{\leb ^ p \log ^{-\alpha} \leb}
\le 
C
\left
\|
f
\right\|^q_{\leb ^ q \log ^{-\alpha} \leb}
\end{equation}
}
Moreover
the operator $\mathcal H$ is continuous. 
\end{theorem}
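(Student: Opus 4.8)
The strategy is an approximation-and-estimate scheme, passing from data $f$ in the Orlicz--Zygmund space $\leb^q\log^{-\alpha}\leb$ to solutions whose gradients live in $\leb^p\log^{-\alpha}\leb$. First I would regularize: given $f\in\leb^q\log^{-\alpha}\leb(\Omega,\mathbb R^N)$, approximate it by $f_k\in\leb^q(\Omega,\mathbb R^N)$ (e.g.\ by truncation, $f_k=f\chi_{\{|f|\le k\}}$, or mollification) with $f_k\to f$ in $\leb^q\log^{-\alpha}\leb$. For each $k$, classical monotone operator theory (Browder--Minty, using \eqref{1.2}--\eqref{1.4}) yields a unique finite-energy solution $u_k\in W^{1,p}_0(\Omega)$ of \eqref{A harm intro f} with datum $f_k$. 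The heart of the matter is then a uniform a priori estimate of the form \eqref{tag eqn} for $u_k$, from which one extracts a weakly convergent subsequence $\nabla u_k\rightharpoonup\nabla u$ in $\leb^{p-1}(\Omega)$ (or better), identifies the limit as a distributional solution via a Minty-type monotonicity argument, and recovers $\nabla u\in\leb^p\log^{-\alpha}\leb$ by weak lower semicontinuity of that norm.

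The key estimate is obtained by testing the equation with a carefully chosen nonlinear function of $u_k$ acting like a logarithmically-weighted truncation. Concretely, using $\varphi = u_k\,\bigl(\log(e+|\nabla u_k|^{?})\bigr)^{-\alpha'}$-type test functions, or more robustly working at the level of the distribution function: set $\mu(t)=|\{|\nabla u_k|>t\}|$ and $\nu(t)=|\{|f|>t\}|$, use truncations $T_\lambda(u_k)$ as test functions to get, from coercivity \eqref{1.2} and Young's inequality, the standard bound $\int_{\{\lambda<|u_k|<\lambda+1\}}|\nabla u_k|^p\,dx\lesssim\int_{\{|u_k|>\lambda\}}|f|^q\,dx$, and then sum these pieces against the logarithmic weight $\log^{-\alpha}$. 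The Orlicz--Zygmund norm is equivalent to $\int_0^{|\Omega|}\bigl(g^*(s)\bigr)^p\log^{-\alpha}(e/s)\,ds$ (with $g^*$ the decreasing rearrangement), so the scalar inequality $\|\nabla u_k\|_{\leb^p\log^{-\alpha}\leb}^p\le C\|f_k\|_{\leq^q\log^{-\alpha}\leb}^q$ follows from a rearrangement/Hardy-inequality computation once one knows the pointwise level-set estimate; this is where the restriction $0<\alpha\le\frac p{p-2}$ enters, ensuring the weighted sums converge — I expect the exponent $\frac p{p-2}$ to arise precisely from balancing the gain $p-2$ in the power of the gradient against the logarithmic loss in reorganizing the series, so that the borderline case is still summable.

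For uniqueness I would argue directly in the class $\{\nabla u\in\leb^p\log^{-\alpha}\leb\}$: if $u_1,u_2$ are two such solutions with the same datum, their difference is admissible as a test function (since $\leb^p\log^{-\alpha}\leb\subset\leb^{p-1}$ with room to spare, and $\frac{r}{r-p+1}$-integrability of the test function is met), and the strict monotonicity \eqref{1.4} forces $\nabla u_1=\nabla u_2$, hence $u_1=u_2$ by the zero boundary condition. Finally, continuity of $\mathcal H$: given $f_j\to f$ in $\leb^q\log^{-\alpha}\leb$, apply the a priori bound \eqref{tag eqn} to the differences in a suitable linearized sense — more precisely, use \eqref{1.4} tested with $u_j-u$ to control $\int|\nabla u_j-\nabla u|^2(|\nabla u_j|+|\nabla u|)^{p-2}$ by $\int|f_j-f||\nabla u_j-\nabla u|$, then combine with the uniform gradient bounds and the reverse-Hölder/interpolation structure of the Orlicz--Zygmund scale to upgrade $\leb^{p-1}$-convergence of $\nabla u_j$ to convergence in $\leb^p\log^{-\alpha}\leb$. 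The main obstacle, and the step deserving the most care, is making the weighted level-set summation rigorous in the endpoint case $\alpha=\frac p{p-2}$: one must be sure the logarithmic weight is handled by an honest rearrangement inequality (Hardy's inequality with the weight $\log^{-\alpha}(e/s)$) rather than by a lossy estimate, since a lossy bound would only give the strict inequality $\alpha<\frac p{p-2}$.
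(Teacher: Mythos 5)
There is a genuine gap, and it sits at the heart of the theorem: your uniqueness argument. You propose to test the equation with $\varphi=u_1-u_2$ and invoke the strict monotonicity \eqref{1.4}. But the solutions here may have infinite energy: $\nabla u_i\in\leb^p\log^{-\alpha}\leb(\Omega)\not\subset\leb^p(\Omega)$, so a solution lies in $\ssob^{1,r}_0(\Omega)$ only for $r<p$, and Definition \ref{Def weak} then admits test functions $\varphi$ with $\nabla\varphi\in\leb^{r/(r-p+1)}$, an exponent strictly larger than $p$. Since $\nabla(u_1-u_2)$ is only in $\leb^{p-\delta}$ for every $\delta>0$ and not in $\leb^{p+\delta}$ for any $\delta>0$, the duality pairing $\int\langle\mathcal A(x,\nabla u_i),\nabla(u_1-u_2)\rangle\,dx$ is not even guaranteed to converge; your parenthetical claim that the $\frac{r}{r-p+1}$-integrability "is met" is false. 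This is exactly the obstruction behind Serrin's pathological solutions, and it is what the paper's fundamental Lemma \ref{uniform lemma} is built to circumvent: one tests instead with the gradient part $\nabla\varphi$ of the Hodge decomposition of the \emph{truncated} field $|\nabla u-\nabla v|^{-\varepsilon p}(\nabla u-\nabla v)$, pays an error term $\varepsilon\,\||\nabla u|+|\nabla v|\|_{p-\varepsilon p}^{p-2}\|\nabla u-\nabla v\|_{p-\varepsilon p}^{2-\varepsilon p}$ coming from the divergence-free component, and then kills that error as $\varepsilon\downarrow0$ using the decay $\varepsilon^{\alpha/q}\|f\|_{q-\varepsilon}\to0$ valid in the Zygmund class. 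Without some such device, uniqueness simply does not follow.

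The second, related gap is that your key a priori estimate is only sketched by analogy with the Boccardo--Gallou\"et truncation method, and the mechanism you invoke does not produce the stated range of $\alpha$. Testing with $T_\lambda(u_k)$ and summing level-set estimates is the natural route for $L^1$ or measure data; for divergence-form data $\divergenza f$ with $f$ below the duality exponent $\leb^q$, the same infinite-energy/duality-pairing problem reappears inside $\int_{\{|u_k|\le\lambda\}}|f||\nabla u_k|$, and you give no actual computation showing that a Hardy-type inequality with weight $\log^{-\alpha}(e/s)$ closes the estimate, let alone at the endpoint $\alpha=\frac p{p-2}$. In the paper the exponent $\frac p{p-2}$ arises from a different and completely explicit source: Young's inequality applied to the Hodge error term produces the factor $\varepsilon^{\frac{p}{p-2}}\||\nabla u|+|\nabla v|\|_{p-\varepsilon p}^{p}$ in \eqref{3.3}, and the whole scale of estimates (existence via a Cauchy-sequence argument, the bound \eqref{tag eqn}, and the continuity of $\mathcal H$) is then obtained by multiplying by $\varepsilon^{\alpha-1}$ and integrating in $\varepsilon$, using the equivalent norm \eqref{norma equiv} and the uniform decay of Lemma \ref{relcomp}. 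Your plan would need to be rebuilt around a quantitative estimate of this type before any of the three conclusions (existence in the stated class, the norm inequality, continuity) can be extracted.
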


\begin{theorem}\label{main main 2intro}
\wred
{
There exists a constant $C>0$ depending on $n, p, \alpha, a$ and $b$ such that, if
}
%
%
$f$ and $g$ belong to   ${\leb ^ q \log ^ {-\alpha} \leb (\Omega,\mathbb R^n)}$,
$1<q<2$ and $0 < \alpha < \frac p {p-2}$, 
then
\begin{equation}\label{3.3 ter}
\|
\mathcal H f - \mathcal H g
\|^p_{\leb^p \log^{-\alpha}\leb}
\leqslant
C
\left(
\|
f-g
\|_{\leb^q \log^{-\alpha}\leb}^{q(1-\gamma)}
\left\|
|f|+|g|
\right\|^{q \gamma }_{\leb^q \log^{-\alpha}\leb}
\right),
\end{equation}
where $\gamma =   \alpha \frac {p-2} p$. 
%
%
\end{theorem}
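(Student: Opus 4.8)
The plan is to estimate the difference $w:=u-v$ of the two solutions ($\nabla u=\mathcal H f$, $\nabla v=\mathcal H g$). Subtracting the two identities \eqref{weak}, the function $w$ solves, in the very weak sense,
\[
\divergenza\big(\mathcal A(x,\nabla u)-\mathcal A(x,\nabla v)\big)=\divergenza(f-g),\qquad w\big|_{\partial\Omega}=0 .
\]
By Theorem~\ref{main main} one has $\nabla u,\nabla v\in\leb^p\log^{-\alpha}\leb$, so $\nabla w$ lies just below $\leb^p$ and $w$ itself is \emph{not} an admissible test function in \eqref{weak}; this is the usual obstruction for very weak solutions, and it forces a Hodge (Helmholtz) decomposition in the spirit of Iwaniec--Sbordone and \cite{GIS}. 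For small $\varepsilon\in(0,\varepsilon_0)$ I would decompose
\[
|\nabla w|^{-\varepsilon}\nabla w=\nabla\psi_\varepsilon+E_\varepsilon,\qquad \divergenza E_\varepsilon=0,
\]
with the Iwaniec-type bound $\|E_\varepsilon\|_{\leb^{(p-\varepsilon)/(1-\varepsilon)}}\le C\,\varepsilon\,\|\nabla w\|_{\leb^{p-\varepsilon}}^{1-\varepsilon}$, the point being that $|\nabla w|^{-\varepsilon}\nabla w$ differs from an exact gradient only by a field of size $O(\varepsilon)$.

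Testing the equation for $w$ with $\varphi=\psi_\varepsilon$, bounding the left-hand side from below by the monotonicity \eqref{1.4} and the right-hand side by the Lipschitz bound \eqref{1.3} (applied to the error $E_\varepsilon$) together with H\"older's inequality (applied to the datum $f-g$), I expect an inequality of the shape
\[
\|\nabla w\|_{\leb^{p-\varepsilon}}^{\,p-1}
\;\lesssim\;
\varepsilon\,\|\nabla w\|_{\leb^{p-\varepsilon}}\,
\bigl\||\nabla u|+|\nabla v|\bigr\|_{\leb^{p-\varepsilon}}^{\,p-2}
\;+\;
\|f-g\|_{\leb^{(p-\varepsilon)/(p-1)}} .
\]
The first term is the unavoidable Hodge error; its weight $(|\nabla u|+|\nabla v|)^{p-2}$ comes straight from \eqref{1.3} and is the reason the final estimate involves $|f|+|g|$ and not only $f-g$. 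Feeding in the scale-$\varepsilon$ a priori bounds underlying Theorem~\ref{main main}, namely $\|\nabla u\|_{\leb^{p-\varepsilon}}\lesssim\|f\|_{\leb^{(p-\varepsilon)/(p-1)}}^{\,q-1}$ and likewise for $v$, and balancing the error term against $\|\nabla w\|_{\leb^{p-\varepsilon}}^{p-1}$ by a dichotomy/Young argument, one arrives at
\[
\|\nabla w\|_{\leb^{p-\varepsilon}}
\;\lesssim\;
\|f-g\|_{\leb^{(p-\varepsilon)/(p-1)}}^{\,q-1}
\;+\;
\varepsilon^{\frac1{p-2}}\,\bigl\||f|+|g|\bigr\|_{\leb^{(p-\varepsilon)/(p-1)}}^{\,q-1}.
\]

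To pass from this sum to the multiplicative form \eqref{3.3 ter}, I would use the pointwise domination $|f-g|\le|f|+|g|$ to write
\[
\|f-g\|_{\leb^{(p-\varepsilon)/(p-1)}}^{\,q-1}
\le
\|f-g\|_{\leb^{(p-\varepsilon)/(p-1)}}^{(q-1)(1-\gamma)}\,
\bigl\||f|+|g|\bigr\|_{\leb^{(p-\varepsilon)/(p-1)}}^{(q-1)\gamma},\qquad \gamma=\alpha\,\tfrac{p-2}{p},
\]
the value of $\gamma$ being dictated by the requirement that the powers of $\varepsilon$ match in the next step. Raising to the power $p-\varepsilon$, multiplying by $\varepsilon^{\alpha-1}$ and integrating over $\varepsilon\in(0,\varepsilon_0)$ reconstructs the Orlicz--Zygmund modular, since $\|h\|_{\leb^p\log^{-\alpha}\leb}^p\simeq\int_0^{\varepsilon_0}\varepsilon^{\alpha-1}\|h\|_{\leb^{p-\varepsilon}}^{\,p-\varepsilon}\,d\varepsilon$ up to lower order terms; a final application of H\"older's inequality in the variable $\varepsilon$, with exponents $\tfrac1{1-\gamma}$ and $\tfrac1\gamma$, turns the product under the integral into the product of two such modulars, that is into $\|f-g\|_{\leb^q\log^{-\alpha}\leb}^{q(1-\gamma)}\,\||f|+|g|\|_{\leb^q\log^{-\alpha}\leb}^{q\gamma}$, while on the complementary range one invokes Theorem~\ref{main main} directly to bound $\||\nabla u|+|\nabla v|\|_{\leb^p\log^{-\alpha}\leb}$ by $\||f|+|g|\|_{\leb^q\log^{-\alpha}\leb}^{q/p}$. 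The strict inequality $\alpha<\tfrac p{p-2}$ enters precisely here: it makes $\gamma<1$, hence the H\"older exponent $\tfrac1{1-\gamma}$ finite and the exponent $q(1-\gamma)$ in \eqref{3.3 ter} positive.

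The main obstacle is the Hodge error term $\varepsilon\,\|\nabla w\|_{\leb^{p-\varepsilon}}\,\||\nabla u|+|\nabla v|\|_{\leb^{p-\varepsilon}}^{p-2}$. For $p=2$ it is linear in $\|\nabla w\|_{\leb^{p-\varepsilon}}$ with the same power as the left-hand side and can simply be absorbed, which is why $\mathcal H$ is Lipschitz in that case; for $p>2$ it cannot, and one must trade it against the a priori bounds of Theorem~\ref{main main} and interpolate in $\varepsilon$. Carrying out this trade-off so that \emph{every} range of $\varepsilon$ is controlled by the single product \eqref{3.3 ter} — in particular checking that the contribution of the range where the error term dominates is still dominated by $\|f-g\|^{q(1-\gamma)}\,\||f|+|g|\|^{q\gamma}$ — is the delicate step, and it is what pins down both the exponent $\gamma=\alpha(p-2)/p$ and the restriction $\alpha<p/(p-2)$.
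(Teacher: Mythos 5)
Your first half correctly reconstructs the paper's fundamental lemma: the Hodge decomposition of $|\nabla u-\nabla v|^{-\ee p}(\nabla u-\nabla v)$, testing with the gradient part, using \eqref{1.4} below and \eqref{1.3} plus H\"older above, and absorbing via Young's inequality is exactly how the paper arrives at \eqref{3.3} and its corollary \eqref{3.3bis}, which is the same scale-by-scale estimate you write (note $q/p=q-1$). The gap is in the second half, the passage from the additive bound
\[
\|\nabla u-\nabla v\|^p_{p-\ee p}\lesssim \ee^{\frac p{p-2}}\,\bigl\||f|+|g|\bigr\|^q_{q-\ee q}+\|f-g\|^q_{q-\ee q}
\]
to the multiplicative estimate \eqref{3.3 ter}. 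Your mechanism --- split $\|f-g\|_{q-\ee q}$ pointwise in $\ee$ into $\|f-g\|^{1-\gamma}\||f|+|g|\|^{\gamma}$ and apply H\"older in $\ee$ --- only acts on the term that is already harmless: since $|f-g|\leqslant|f|+|g|$ one trivially has $\|f-g\|^q_{\leb^q\log^{-\alpha}\leb}\leqslant\|f-g\|^{q(1-\gamma)}_{\leb^q\log^{-\alpha}\leb}\||f|+|g|\|^{q\gamma}_{\leb^q\log^{-\alpha}\leb}$. The genuine obstruction is the other term: $\int_0^{\ee_0}\ee^{\alpha-1+\frac p{p-2}}\||f|+|g|\|^q_{q-\ee q}\,d\ee$ contains no trace of $f-g$ whatsoever, so no amount of H\"older in $\ee$ over the full range $(0,\ee_0)$ can produce the factor $\|f-g\|^{q(1-\gamma)}$; integrating over the whole range only yields the additive bound \eqref{201309188}, which does not imply continuity of $\mathcal H$. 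Your ``dichotomy on the complementary range'' gestures at the right idea but is not carried out, and as stated (bounding $\||\nabla u|+|\nabla v|\|$ by $\||f|+|g|\|^{q/p}$ there) it again produces a term $\||f|+|g|\|^q$ with no small factor, because the complementary range of $\ee$ does not have small $\ee^{\alpha-1}d\ee$-measure.

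The paper's resolution is different and is the actual content of the proof: one multiplies \eqref{3.3bis} by $\ee^{\alpha-1}$ and integrates only over the \emph{truncated} range $(0,\vartheta\ee_p(n))$ with $\vartheta\in{}]0,1]$ a free parameter. On the left, the monotonicity of $\ee\mapsto\|h\|_{p-\ee}$ and the substitution $\delta=\ee p/\vartheta\geqslant\ee p$ recover the \emph{full} norm $\|\nabla u-\nabla v\|^p_{\leb^p\log^{-\alpha}\leb}$ at the cost of a factor $(\vartheta/p)^\alpha$ (this is \eqref{201309185}); on the right, the truncation gains a factor $\vartheta^{\frac p{p-2}}$ on the $\||f|+|g|\|$ term. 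This yields \eqref{stimaGIS5},
\[
\|\nabla u-\nabla v\|_{\leb^p\log^{-\alpha}\leb}^p\lesssim \vartheta^{\frac p{p-2}-\alpha}\bigl\||f|+|g|\bigr\|_{\leb^q\log^{-\alpha}\leb}^q+\vartheta^{-\alpha}\|f-g\|_{\leb^q\log^{-\alpha}\leb}^q\,,
\]
and the choice $\vartheta^{\frac p{p-2}}=\|f-g\|^q_{\leb^q\log^{-\alpha}\leb}\big/\||f|+|g|\|^q_{\leb^q\log^{-\alpha}\leb}\leqslant1$ equalizes the two terms and produces exactly $\gamma=\alpha\frac{p-2}p$. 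So the exponent $\gamma$ is pinned down by optimizing the truncation parameter, not by matching powers of $\ee$ under an interpolation in the datum. If you want to keep your dichotomy framing, it can be repaired, but only by choosing the splitting point in $\ee$ to be this same $\vartheta$ and by invoking the embedding \eqref{201309181}--\eqref{immgranleb} to control the tail --- at which point you have reproduced the paper's argument.
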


\medskip

We point out that  Theorem \ref{main main} 
improves the result of \cite{GIS} in two different directions.
First of all, when $0 < \alpha < \frac p {p-2}$, it gives higher integrability of the solutions found in \cite{GIS}. On the other hand, the case $\alpha=\frac p {p-2}$ is not covered by \cite{GIS}.

\medskip

In the particular case that the vector field $\mathcal A$ takes the form
\begin{equation}\label{matric matcal A}
\mathcal A (x,\xi)
=
\left\langle
A(x)\xi,\xi
\right\rangle^{\frac {p-2} 2}
A(x) \xi
\end{equation}
where $A\colon\Omega \rightarrow \mathbb R^{N\times N}$ is a measurable, symmetric, uniformly elliptic matrix field,
we also prove a stability theorem for solutions to problem \eqref{A harm intro f} in terms of the characteristic of $A$ (see Section \ref{sect4}).
The
\emph{characteristic} of the symmetric  matrix field $\ellmatr\colon \Omega \rightarrow
\RR^{N\times N}$
(see \cite{Iwaniec})
is defined
as the quantity
\begin{equation}\label{char}
K_\ellmatr = \underset  {x \in \Omega} { {\rm ess \,  sup} } 
\left(
1+|\ellmatr(x)-I|
\right)^{\frac p 2}.
\end{equation} 
Observe that $K_\ellmatr \ge 1$ and $K_A =1$ if and only if $A$ is the identity matrix.

\begin{theorem}
\label{comparison}
Assume that
$A\colon \Omega \rightarrow
\RR^{N\times N}$
%
%
is 
a measurable  
symmetric
matrix field satisfying
the 
ellipticity 
bounds
\begin{equation}\label{unif ell}
a^{\frac 2 p}
|\xi|^2
\le
\scal{A(x) \xi}{ \xi }
\le
b^{\frac 2 p}
|\xi|^2, 
\end{equation}
for a.e. $x\in \Omega$, for every $\xi\in \mathbb R^N$.
\wred
{
There exists a constant $C>0$ depending on $n, p, \alpha, a$ and $b$ such that, if
$u , v \in \Sob  \alpha (\Omega)$, with
$0 < \alpha < \frac p {p-2}$, 
verify
}
\begin{equation}\label{A harm intro 2}
\left\{
\begin{array}{rl}
&  
\aplapl {\ellmatr(x)} u p = 
\plapl v p	\qquad\text{in $  \Omega$},
  \\
\, \\
&   u = v   
\qquad\text{on $\partial \Omega$}
, \\
\end{array}
\right.
\end{equation}
\wred{then
\begin{equation}\label{4.7}
\begin{split}
\|\nabla u - \nabla v \|^p_{\leb^p\log^{-\alpha}\leb}
\leqslant
C \left(K_A-1\right)^ { q (1-\gamma) }   
K_A^{ q (\gamma + 1) }
\left\| |\nabla u| + |\nabla v| \right\|^p_{\leb^p\log^{-\alpha}\leb}
\end{split}
\end{equation}
%
where $\gamma =   \alpha \frac {p-2} p$. 
}
\end{theorem}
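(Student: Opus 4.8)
The plan is to regard the left-hand operator in \eqref{A harm intro 2} as a particular case of the operator $\mathcal A$ appearing in \eqref{A harm intro f} and then to invoke the stability estimate \eqref{3.3 ter}. Concretely, I would set
\[
\mathcal A(x,\xi):=\scal{A(x)\xi}{\xi}^{\frac{p-2}{2}}A(x)\xi,\qquad f:=|\nabla v|^{p-2}\nabla v,\qquad g:=\mathcal A(x,\nabla v).
\]
The bounds \eqref{unif ell} are chosen precisely so that $\scal{\mathcal A(x,\xi)}{\xi}=\scal{A(x)\xi}{\xi}^{p/2}\ge a\,|\xi|^{p}$, and the classical computations for $p$-Laplacian type fields show that $\mathcal A$ is a Carath\'eodory vector field satisfying \eqref{1.2}--\eqref{1.4} with structural constants depending only on $n,p,a,b$. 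Moreover $\nabla v\in\leb^p\log^{-\alpha}\leb$ forces $|\nabla v|^{p-1}\in\leb^q\log^{-\alpha}\leb$, so that $f,g\in\leb^q\log^{-\alpha}\leb(\Omega,\mathbb R^N)$. With this notation $u$ is a solution of $\divergenza\mathcal A(x,\nabla u)=\divergenza f$ and $v$ is (trivially) a solution of $\divergenza\mathcal A(x,\nabla v)=\divergenza g$, both with the common boundary datum $u=v$ on $\partial\Omega$.

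The ingredient specific to this theorem is a pointwise comparison of $\mathcal A(x,\cdot)$ with the $p$-Laplacian, governed by the characteristic $K_A$. I would prove that, for a.e.\ $x\in\Omega$ and every $\xi\in\mathbb R^N$,
\[
\left|\,\mathcal A(x,\xi)-|\xi|^{p-2}\xi\,\right|\le C\,(K_A-1)\,|\xi|^{p-1},
\qquad
|\mathcal A(x,\xi)|+|\xi|^{p-1}\le C\,K_A\,|\xi|^{p-1}.
\]
The first inequality follows, by the $(p-1)$-homogeneity in $\xi$ (so that one may take $|\xi|=1$), from the smoothness of $B\mapsto\scal{B\xi}{\xi}^{(p-2)/2}B\xi$ on the compact set of symmetric matrices with eigenvalues in $[a^{2/p},b^{2/p}]$ — there the positivity $\scal{B\xi}{\xi}\ge a^{2/p}>0$ keeps this map of class $C^1$ uniformly for $|\xi|=1$ — together with $|A(x)-I|\le K_A^{2/p}-1\le K_A-1$, which is immediate from \eqref{char} since $p\ge 2$ and $K_A\ge 1$. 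The second inequality uses only $1+|A(x)-I|\le K_A^{2/p}$, whence $\scal{A(x)\xi}{\xi}\le K_A^{2/p}|\xi|^2$ and $|A(x)\xi|\le K_A^{2/p}|\xi|$. Evaluating both at $\xi=\nabla v(x)$ and using the elementary norm relation $\left\|\,|h|^{p-1}\,\right\|_{\leb^q\log^{-\alpha}\leb}\simeq\|h\|^{p-1}_{\leb^p\log^{-\alpha}\leb}$ (a direct manipulation of the Luxemburg norm, where $(p-1)q=p$ is used), one obtains
\[
\|f-g\|_{\leb^q\log^{-\alpha}\leb}\le C\,(K_A-1)\,\|\nabla v\|^{p-1}_{\leb^p\log^{-\alpha}\leb},
\qquad
\left\|\,|f|+|g|\,\right\|_{\leb^q\log^{-\alpha}\leb}\le C\,K_A\,\|\nabla v\|^{p-1}_{\leb^p\log^{-\alpha}\leb}.
\]

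To conclude I would quote \eqref{3.3 ter}. Although Theorem \ref{main main 2intro} is phrased via the operator $\mathcal H$ attached to the homogeneous Dirichlet problem, its proof only compares two solutions of one and the same operator $\mathcal A$, by testing the difference of the two weak formulations against the difference of the solutions; since $u-v$ has zero boundary values, the same argument applies verbatim to the present pair, with $f$ and $g$ playing the role of the data. This gives, with $\gamma=\alpha\frac{p-2}{p}$,
\[
\|\nabla u-\nabla v\|^{p}_{\leb^p\log^{-\alpha}\leb}
\le
C\left(\|f-g\|^{q(1-\gamma)}_{\leb^q\log^{-\alpha}\leb}\,\left\|\,|f|+|g|\,\right\|^{q\gamma}_{\leb^q\log^{-\alpha}\leb}\right),
\]
and substituting the two norm bounds above, using $(p-1)q=p$, then $\|\nabla v\|_{\leb^p\log^{-\alpha}\leb}\le\left\|\,|\nabla u|+|\nabla v|\,\right\|_{\leb^p\log^{-\alpha}\leb}$ and $K_A^{q\gamma}\le K_A^{q(\gamma+1)}$ (valid since $K_A\ge 1$), yields exactly \eqref{4.7} — in fact a slightly stronger inequality, with $K_A^{q\gamma}$ in place of $K_A^{q(\gamma+1)}$.

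The points I expect to need care are, first, checking that the proof of Theorem \ref{main main 2intro} is genuinely insensitive to replacing membership in $W^{1,r}_0$ by the weaker requirement that $u-v$ have zero boundary values — in particular the admissibility of $\varphi=u-v$ as a test function in both weak formulations, which is the usual truncation/approximation issue — and, second, carrying out the verification of \eqref{1.3}--\eqref{1.4} for this specific $\mathcal A$ and of the two pointwise bounds above, keeping the dependence on $a,b$ explicit. Both are classical for degenerate $p$-Laplacian operators, so they amount to bookkeeping rather than a genuine obstacle: the conceptual content is already present in Theorems \ref{main main} and \ref{main main 2intro}, and the only new idea here is the quantification, by $K_A-1$, of the distance between $\mathcal A(x,\cdot)$ and the field $\xi\mapsto|\xi|^{p-2}\xi$.
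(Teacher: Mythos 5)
Your strategy is in substance the paper's own: rewrite the $p$-Laplacian side as a datum $f=|\nabla v|^{p-2}\nabla v$ for the operator $\mathcal A(x,\xi)=\scal{A(x)\xi}{\xi}^{(p-2)/2}A(x)\xi$, bound $|f-\mathcal A(x,\nabla v)|\lesssim (K_A-1)|\nabla v|^{p-1}$ pointwise via the characteristic, and feed this into the stability machinery of Section~3 with the $\vartheta$-optimization. The paper's starting estimate \eqref{4.1} (derived as in \cite{FM}) is exactly Lemma \ref{uniform lemma} applied to this pair combined with your pointwise bound, since $\||\nabla v|^{p-1}\|^q_{q-\ee q}=\|\nabla v\|^p_{p-\ee p}$; the subsequent integration in $\ee$ and the choice $\vartheta^{p/(p-2)}=\left((K_A-1)/K_A\right)^{q}$ reproduce your optimization. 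Your auxiliary claims --- the norm relation $\||h|^{p-1}\|_{\leb^q\log^{-\alpha}\leb}\simeq\|h\|^{p-1}_{\leb^p\log^{-\alpha}\leb}$ and the bound $|A(x)-I|\leqslant K_A^{2/p}-1\leqslant K_A-1$ --- are both correct.

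The one genuine gap is the appeal to \eqref{3.3 ter} ``verbatim''. That estimate is obtained by integrating \eqref{3.3bis}, and \eqref{3.3bis} is \emph{not} proved merely by testing the difference of the two weak formulations: it is \eqref{3.3} combined with the separate a priori bounds \eqref{3.14 bis}--\eqref{3.14 ter}, each of which compares one solution with the zero function and therefore needs that solution to lie in $\ssob^{1,p-\ee p}_0(\Omega)$ \emph{individually}. In the present situation $u$ and $v$ share a generally nonzero boundary datum, so only $u-v$ has zero trace and \eqref{3.14 bis}--\eqref{3.14 ter} are unavailable; consequently \eqref{3.3 ter}, with $\||f|+|g|\|$ on the right, cannot be invoked as stated. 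The repair is immediate and is precisely what the paper does: bypass \eqref{3.3bis}, integrate \eqref{3.3} directly (it requires only $u-v\in\ssob_0^{1,p-\ee p}(\Omega)$), and keep $\||\nabla u|+|\nabla v|\|_{p-\ee p}$ rather than $\||f|+|g|\|_{q-\ee q}$ in the $\ee^{p/(p-2)}$ term. Since the target \eqref{4.7} carries exactly $\||\nabla u|+|\nabla v|\|_{\leb^p\log^{-\alpha}\leb}$ on the right-hand side, this is the natural route; the optimization over $\vartheta$ goes through unchanged (noting that when the optimal $\vartheta$ would exceed $1$, i.e.\ when $K_A-1\gtrsim1$, the claimed inequality holds trivially with $\vartheta=1$). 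With this repair your argument is complete and, as you observe, even yields a slightly better power of $K_A$ than \eqref{4.7}.
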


The main tool to prove our results is the Hodge decomposition and fine properties of the norm in the Zygmund spaces developped in Section \ref{sect2}.

\section{Preliminary results}\label{sect2}
\subsection{Basic notation}

%
We indicate that quantities $a,b\ge0$ are equivalent  by writing
$a \sim b$; namely,     $a \sim b$ will mean that
there exist 
\wred{constants}
 $c_1,c_2  >  0$
such that $c_1 a \le b \le c_2 a$.
Similarly,   $a \lesssim b$  ($a \gtrsim   b$ respectively)
will mean that there exists $c >  0$ such that 
$
a \le c b
$
($a \ge c   b$ respectively).
%
%
%
%

\medskip

\noindent From now on, 
$\Omega$ will denote  a bounded \wred{Lipschitz} domain in $\mathbb R^N$. 
For a function $v \in \leb^p(\Omega)$ with $1\le p<\infty$ 
we set 
\[
\|
v
\|_p
=
\left(
\average \Omega {|v|^{p}} x
\right)^{\frac 1 {p}}
\]
Barred integrals denote averages, namely $\averagebis \Omega {}{}=\frac 1 {|\Omega|}\int_\Omega$.
%
%
%
%
%
\subsection{Marcinkiewicz Spaces}
For \wred{$0 < p<\infty$},  
the
\emph{Marcinkiewicz space weak-$\leb^p (\Omega)$
},
\wred{
also denoted by 
$\leb^{p,\infty} (\Omega) $,
consists} 
of
all measurable  functions 
$g:\Omega\rightarrow \mathbb R$
such that
$$
\left\|g\right\|^p_{  \leb ^    {p,\infty}  (\Omega)}
\equiv
\left\|g\right\|^p_{p,\infty}=\sup_{t>0}t^{ p}| \{x\in \Omega:|g(x)|>t\} |< \infty.
$$
A useful property of the Marcinkiewicz norm is given by the following identities
\begin{equation}\label{nota}
\left\|  |g|^{\alpha}  \right\|^p_{ p,\infty }=\|g\|^{\alpha p}_{ \alpha p,\infty } 
\qquad
\text{for $\alpha>0$}.
\end{equation}
For $1<q<p$ one has
\begin{equation*}
\leb ^{p,\infty }(\Omega) \subset \leb^q (\Omega).
\end{equation*}
\wred{We shall
appeal} to the following H\"older type inequality
\begin{equation}\label{Holder Mar}
\| v \|_{\leb^q  (E)}
\leqslant
\left(
\frac
p
{p-q}
\right)
^{\frac 1 q}
|E|^{-\frac 1 q}
|\Omega|^{\frac 1 q - \frac 1 p}
\| v \|_{L^{p,\infty}  (\Omega)}
\end{equation}
which holds true for $v \in \leb^{p,\infty}(\Omega)$, $E\subset\Omega$ and $q<p$.
\subsection{Grand Lebesgue and  grand Sobolev Spaces} \label{grand Sob sec}
For $1< p<\infty$ 
we denote by $\grandLebesgue p \Omega$
\emph{the grand--Lebesgue space} $\grandLebesgue{p}{\Omega}$
consisting of all functions 
$v \in \bigcap _{ 0 < \varepsilon \le p-1}
L^{ p-\varepsilon} (\Omega)$
such that
\begin{equation}
\|
v
\|_{p)}
=
\sup_{ 0 < \varepsilon \le p-1}
\varepsilon ^{\frac 1 p}
\left(
\average \Omega {|v|^{p-\varepsilon}} x
\right)^{\frac 1 {p-\varepsilon}}
<\infty. 
\end{equation} 
Moreover
\begin{equation}
\|
v
\|_{p)} \sim 
\sup_{ 0 < \varepsilon \le p-1}
\left(
\varepsilon
\average \Omega {|v|^{p-\varepsilon}} x
\right)^{\frac 1 {p-\varepsilon}}
.
\end{equation}
The Marcinkiewicz class ${\rm weak}-\leb^p(\Omega)$
is contained in $\leb^{p)}(\Omega)$
(see \cite[Lemma 1.1]{IS}). 

\medskip

\wred{More generally}, if
$\alpha >0$
we denote by $\leb^{\alpha,p)}(\Omega)$
\emph{the grand--Lebesgue space}  
consisting of all functions 
$v \in \bigcap _{ 0 < \varepsilon \le p-1}
\leb^{ p-\varepsilon} (\Omega)$
such that
\begin{equation}
\|
v
\|_{\alpha,p)}
=
\sup_{ 0 < \varepsilon \le p-1}
\varepsilon ^{\frac \alpha p}
\left(
\average \Omega {|v|^{p-\varepsilon}} x
\right)^{\frac 1 {p-\varepsilon}}
<\infty. 
\end{equation}

\subsection{Zygmund spaces}
We shall need to consider the Zygmund space $\leb^q\log^{-\alpha}\leb(\Omega)$, for $1<q<\infty$, $\alpha>0$. 
This is the Orlicz space generated by the function
\[\Phi(t)=t^q \log^{-\alpha}(a+t)\,,\qquad t\geqslant 0\,,\]
where $a\geqslant\E$ is a suitably large constant, so that $\Phi$ is increasing and convex on $[0,\infty[$. The choice of $a$ will be immaterial. 
More explicitly, for a measurable function $f$ on $\Omega$, $f\in\leb^q\log^{-\alpha}\leb(\Omega)$ simply means that
\[\int_\Omega |f|^q\log^{-\alpha}(a+|f|)\,dx<\infty\,.\]
It is customary to consider the Luxemburg norm
\[[f]_{\leb^q\log^{-\alpha}\leb}=\inf\left\{\lambda>0\ :\ \medint_\Omega\Phi(|f|/\lambda)\,dx\leqslant 1\right\},\]
and $\leb^q\log^{-\alpha}\leb(\Omega)$ is a Banach space. However, we shall introduce an equivalent norm, which involves the norms in $\leb^{q-\ee}(\Omega)$, for $0<\ee\leqslant q-1$, and is more suitable for our purposes. For $f$ measurable on $\Omega$, we set
\begin{equation}\label{norma equiv}
\|f\|_{\leb^q\log^{-\alpha}\leb}=\left\{\int_0^{\ee_0}\ee^{\alpha-1}\|f\|_{q-\ee}^q\,d\ee\right\}^{1/q}
\end{equation}
Here $\ee_0\in{}]0,q-1]$ is fixed. The following is a refinement of a result of \cite{Gr}.
\begin{lemma}\label{nuova norma}
We have $f\in \leb^q\log^{-\alpha}\leb(\Omega)$ if and only if
\begin{equation}\label{finitezza}
\|f\|_{\leb^q\log^{-\alpha}\leb}<\infty\,.
\end{equation}
Moreover, $\|~\|_{\leb^q\log^{-\alpha}\leb}$ is a norm equivalent to the Luxemburg one, that is, there exist constants $C_{i}=C_{i}(q,\alpha,a,\ee_0)$, $i=1,2$, such that for all $f\in \leb^q\log^{-\alpha}\leb(\Omega)$
\[C_{1}\,[f]_{\leb^q\log^{-\alpha}\leb}\leqslant \|f\|_{\leb^q\log^{-\alpha}\leb}\leqslant C_{2}\,[f]_{\leb^q\log^{-\alpha}\leb}\,.\]
\end{lemma}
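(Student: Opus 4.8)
The plan is to work directly with the defining integral in \eqref{norma equiv} and compare it, up to multiplicative constants, with the modular $\int_\Omega|f|^q\log^{-\alpha}(a+|f|)\,dx$, from which the equivalence with the Luxemburg norm follows by a standard scaling argument. First I would recall the elementary layer-cake/Fubini identity
\[
\int_0^{\ee_0}\ee^{\alpha-1}\|f\|_{q-\ee}^q\,d\ee
=\frac{1}{|\Omega|}\int_\Omega\left(\int_0^{\ee_0}\ee^{\alpha-1}|f(x)|^{q-\ee}\,d\ee\right)|f(x)|^{0}\,dx,
\]
wait—more carefully, since $\|f\|_{q-\ee}^q=\bigl(\dashint_\Omega|f|^{q-\ee}\bigr)^{q/(q-\ee)}$ is a power of an average rather than an average of a power, I cannot commute the integrals immediately. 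So the first real step is to show that the exponent $q/(q-\ee)$ can be harmlessly replaced by $1$: since $0<\ee\le\ee_0\le q-1$, the map $\ee\mapsto\bigl(\dashint_\Omega|f|^{q-\ee}\bigr)^{q/(q-\ee)}$ is comparable, uniformly in $\ee$, to $\ee\mapsto\dashint_\Omega|f|^{q-\ee}\,dx$ times a bounded factor, using Jensen/Hölder on the finite-measure space $\Omega$ together with the fact that $|\Omega|$ is fixed. This reduces matters to estimating $\int_0^{\ee_0}\ee^{\alpha-1}\bigl(\dashint_\Omega|f|^{q-\ee}\,dx\bigr)d\ee$, and here Tonelli applies to give $\frac1{|\Omega|}\int_\Omega|f(x)|^q\bigl(\int_0^{\ee_0}\ee^{\alpha-1}|f(x)|^{-\ee}\,d\ee\bigr)dx$.

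The heart of the argument is then the pointwise two-sided estimate
\[
\int_0^{\ee_0}\ee^{\alpha-1}t^{-\ee}\,d\ee\sim\bigl(1+\log^+ t\bigr)^{-\alpha}\sim\log^{-\alpha}(a+t)\qquad(t>0),
\]
which I would prove by splitting at $\ee=1/\log(a+t)$ (for $t$ large) and by noting both sides are bounded above and below by positive constants for $t$ in a bounded range. For $t\ge a$ the substitution $\sigma=\ee\log t$ turns the integral into $(\log t)^{-\alpha}\int_0^{\ee_0\log t}\sigma^{\alpha-1}e^{-\sigma}\,d\sigma$, and the last integral is squeezed between a fixed positive constant and $\Gamma(\alpha)$; for $t\le a$ the integrand $\ee^{\alpha-1}t^{-\ee}$ is comparable to $\ee^{\alpha-1}$, whose integral over $(0,\ee_0)$ is the constant $\ee_0^\alpha/\alpha$. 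Combining this with the reduction above yields
\[
\|f\|_{\leb^q\log^{-\alpha}\leb}^q\sim\frac1{|\Omega|}\int_\Omega|f|^q\log^{-\alpha}(a+|f|)\,dx,
\]
i.e.\ \eqref{norma equiv} is finite exactly when $f\in\leb^q\log^{-\alpha}\leb(\Omega)$, which is \eqref{finitezza}.

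Finally, to pass from modular equivalence to norm equivalence I would invoke the standard fact for Orlicz spaces generated by a $\Delta_2$ Young function: the Luxemburg norm $[f]$ is characterized, up to constants, by $\int_\Omega\Phi(|f|/[f])\,dx\sim1$, and $\Phi(t)=t^q\log^{-\alpha}(a+t)$ satisfies $\Phi(\lambda t)\sim\lambda^q\Phi(t)$ for $\lambda$ in any fixed bounded range (again because the logarithmic factor varies slowly), so homogeneity of degree $q$ holds up to constants; applying the modular estimate to $f/[f]$ gives $C_1[f]\le\|f\|\le C_2[f]$ with constants depending only on $q,\alpha,a,\ee_0$. The main obstacle is the first reduction step—controlling the discrepancy between the power-of-average $\|f\|_{q-\ee}^q$ and the average-of-power $\dashint_\Omega|f|^{q-\ee}$ uniformly in $\ee\in(0,\ee_0]$—since that is where one must be careful that no constant blows up as $\ee\to0$ or as $\|f\|_{q-\ee}$ becomes large or small; everything after that is the clean Gamma-function computation sketched above, which is essentially the refinement of \cite{Gr} being claimed.
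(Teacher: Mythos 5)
Your overall architecture --- comparing the quantity \eqref{norma equiv} with the modular $\medint_\Omega|f|^q\log^{-\alpha}(a+|f|)\,dx$ via Fubini and the pointwise identity $\int_0^{\ee_0}\ee^{\alpha-1}(a+t)^{-\ee}\,d\ee=\log^{-\alpha}(a+t)\int_0^{\ee_0\log(a+t)}\tau^{\alpha-1}\E^{-\tau}\,d\tau$, with the last integral squeezed between two positive constants --- is exactly the paper's, and that part of your plan is sound. The genuine gap is precisely the step you flag as ``the main obstacle'': replacing the power-of-average $\|f\|_{q-\ee}^q$ by the average-of-power $\medint_\Omega|f|^{q-\ee}\,dx$. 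The discrepancy factor is $\|f\|_{q-\ee}^{\ee}$, and no Jensen or H\"older inequality on the finite measure space makes it uniformly bounded in $f$: replacing $f$ by $\lambda f$ introduces $\lambda^{\ee}$, whose supremum over $\ee\in(0,\ee_0]$ is $\lambda^{\ee_0}\to\infty$. Equivalently, your claimed intermediate equivalence $\|f\|^q_{\leb^q\log^{-\alpha}\leb}\sim\medint_\Omega|f|^q\log^{-\alpha}(a+|f|)\,dx$ with constants independent of $f$ is impossible on scaling grounds: the left-hand side is homogeneous of degree $q$ in $f$, while the right-hand side is not (the logarithm is slowly varying but not constant). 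So the reduction as you describe it cannot be carried out for general $f$.

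The paper avoids this by never forming the exponent $q/(q-\ee)$ inside the Fubini argument: it establishes the two-sided modular bound for $\int_0^{\ee_0}\ee^{\alpha-1}\|f\|_{q-\ee}^{q-\ee}\,d\ee$ (exponent $q-\ee$, an honest average of a power), and only afterwards converts between $\|f\|_{q-\ee}^{q-\ee}$ and $\|f\|_{q-\ee}^{q}$, separately in each direction and after normalizing. One direction is trivial, via $\|f\|_{q-\ee}^{q-\ee}\le\|f\|_{q-\ee}^q+1$; the other requires the a priori decay $\|f\|_{q-\ee}\le C\,\ee^{-\alpha/q}\,[f]_{\leb^q\log^{-\alpha}\leb}$ coming from the continuous embedding of the Zygmund space into the grand Lebesgue space $\leb^{\alpha,q)}$, which yields $\|f\|_{q-\ee}^{\ee}\le C'$ once one normalizes $[f]_{\leb^q\log^{-\alpha}\leb}=1$. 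That embedding is the ingredient missing from your plan; with it, together with the normalize-then-use-homogeneity scheme you already sketch at the end (which is essentially what the paper does in place of a general $\Delta_2$ modular argument), your proof closes.
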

\begin{proof}
It is easy to check that $\|~\|_{\leb^q\log^{-\alpha}\leb}$ defined by \eqref{norma equiv} is a norm.\par
Let $f$ be  a measurable function defined in $\Omega$. We clearly have
\[|f|^q(a+|f|)^{-\ee}\leqslant |f|^{q-\ee}\leqslant 2^{q-1}[a^{q}+|f|^{q}(a+|f|)^{-\ee}]\,,\]
for a.e.\ in $\Omega$, hence integrating
\[\medint_\Omega |f|^q(a+|f|)^{-\ee}\,dx\leqslant \|f\|_{q-\ee}^{q-\ee}\leqslant 2^{q-1}a^{q}+2^{q-1}\medint_\Omega |f|^{q}(a+|f|)^{-\ee}\,dx\,.\]
This in turn implies
\begin{equation}\label{201309111}
\begin{array}{rl}
\ds \int_0^{\ee_0}\ee^{\alpha-1}\kern-.7em &\ds\left[\medint_\Omega |f|^q(a+|f|)^{-\ee}\,dx\right]d\ee\leqslant\int_0^{\ee_0}\ee^{\alpha-1}\|f\|_{q-\ee}^{q-\ee}\,d\ee\\\\
&\ds{}\leqslant2^{q-1}a^{q}\frac{\ee_0^\alpha}\alpha+2^{q-1}\int_0^{\ee_0}\ee^{\alpha-1}\left[\medint_\Omega |f|^q(a+|f|)^{-\ee}\,dx\right]d\ee\,.
\end{array}
\end{equation}
Moreover,
\[\int_0^{\ee_0}\ee^{\alpha-1}(a+|f|)^{-\ee}\,d\ee=\log^{-\alpha}(a+|f|)\int_0^{\ee_{0}\log(a+|f|)}\tau^{\alpha-1}\E^{-\tau}\,d\tau\]
and
\[
\int_0^{\ee_{0}\log a}\tau^{\alpha-1}\E^{-\tau}\,d\tau \leqslant \int_0^{\ee_{0}\log(a+|f|)}\tau^{\alpha-1}\E^{-\tau}\,d\tau \leqslant \int_0^{\infty}\tau^{\alpha-1}\E^{-\tau}\,d\tau
\]
Therefore from \eqref{201309111} we get
\begin{equation}\label{201309112}
\begin{array}{rl}
\ds C_3\medint_\Omega |f|^q\log^{-\alpha}(a+|f|)\,dx\kern-.7em&\ds{}\leqslant \int_0^{\ee_0}\ee^{\alpha-1}\|f\|_{q-\ee}^{q-\ee}\,d\ee\\
&\ds{}\leqslant C_4\left[1+\medint_\Omega |f|^q\log^{-\alpha}(a+|f|)\,dx\right]
\end{array}
\end{equation}
for some positive contants.\par
Assume now that $f$ satisfies \eqref{finitezza}. As
\[\|f\|_{q-\ee}^{q-\ee}\leqslant \|f\|_{q-\ee}^{q}+1\]
we see that the first term of \eqref{201309112} is finite, so $f\in \leb^q\log^{-\alpha}\leb(\Omega)$. Furthermore, if $\|f\|_{\leb^q\log^{-\alpha}\leb}=1$, then \eqref{201309112} implies
\[\medint_\Omega |f|^q\log^{-\alpha}(a+|f|)\,dx\le C_{5}\]
for a constant independet of $f$. By homogeneity,
\begin{equation}\label{201309114}
[f]_{\leb^q\log^{-\alpha}\leb}\leqslant C_{5}\, \|f\|_{\leb^q\log^{-\alpha}\leb}
\end{equation}
for all $f$.\par
In case $f\in \leb^q\log^{-\alpha}\leb(\Omega)$, since the Zygmund space is continuously embedded in the gran Lebesgue space $\leb^{\alpha,q)}$ (see \cite{IS}), there exists a constant $C_{6}>0$ such that
\[\|f\|_{q-\ee}\leqslant C_{6}\,\ee^{-\alpha/q}\,[f]_{\leb^q\log^{-\alpha}\leb}\,,\]
thus
\[\|f\|_{q-\ee}^q=\|f\|_{q-\ee}^{q-\ee}\,\|f\|_{q-\ee}^\ee\leqslant\|f\|_{q-\ee}^{q-\ee}\,C_{7}[f]_{\leb^q\log^{-\alpha}\leb}^\ee\]
and by \eqref{201309112} we get \eqref{finitezza}. Infact, if $[f]_{\leb^q\log^{-\alpha}\leb}=1$, then we have
\[\|f\|_{\leb^q\log^{-\alpha}\leb}\leqslant C_{8}\]
and by homogeneity we conclude with the reverse inequality to \eqref{201309114}.
\end{proof}
\begin{remark}
We examine the dependence of $\|~\|_{\leb^q\log^{-\alpha}\leb}$ defined by \eqref{norma equiv}, on the parameter $\ee_0$. For fixed $0<\ee_{0}\leqslant\ee_{1}\leqslant q-1$, by H\"older's inequality we have
\[\|f\|_{q-\ee}\leqslant\|f\|_{q-\ee\,\ee_{0}/\ee_{1}}\,,\]
and hence
\begin{equation}\label{201309191}
\int_{0}^{\ee_{0}}\ee^{\alpha-1}\|f\|_{q-\ee}^q\,d\ee\leqslant \int_{0}^{\ee_{1}}\ee^{\alpha-1}\|f\|_{q-\ee}^q\,d\ee\leqslant \left(\frac{\ee_{1}}{\ee_{0}}\right)^{\alpha}\int_{0}^{\ee_{0}}\ee^{\alpha-1}\|f\|_{q-\ee}^q\,d\ee\,. 
\end{equation}
\end{remark}
\begin{remark}
It is clear that \eqref{finitezza} implies $f\in \leb^{\alpha,q)}(\Omega)$. We remark that the norm \eqref{norma equiv} compares in a very simple way with $\|f\|_{\leb^{\alpha,q)}}$. Indeed, as $\ee\mapsto \|f\|_{q-\ee}$ is decreasing, for all $\sigma\in{}]0,q-1]$ we have
\begin{equation}\label{201309181}
\left\{\int_0^\sigma\ee^{\alpha-1}\|f\|_{q-\ee}^q\,d\ee\right\}^{1/q}\geqslant \|f\|_{q-\sigma}\,\left(\frac {\sigma^\alpha}\alpha\right)^{1/q},
\end{equation}
hence by \eqref{201309191}
\begin{equation}\label{immgranleb}
\|f\|_{\leb^{\alpha,q)}}\leqslant \left(\frac{q-1}{\ee_{0}}\right)^{\alpha/q}\alpha^{1/q}\,\|f\|_{\leb^q\log^{-\alpha}\leb}\,.
\end{equation}
Moreover, using \eqref{norma equiv}, the inclusion $\leb^{\alpha,q)}(\Omega)\subset \leb^q\log^{-\beta}(\Omega)$ for $\beta>\alpha$ (see \cite{Gr}) is trivial:
\[\int_0^{\ee_0}\ee^{\beta-1}\|f\|_{q-\ee}^q\,d\ee=\int_0^{\ee_0}\ee^{\alpha}\|f\|_{q-\ee}^q\,\ee^{\beta-\alpha-1}\,d\ee\]
and then
\[
\|f\|_{\leb^q\log^{-\beta}\leb}\le \left(\frac{\ee_0^{\beta-\alpha}}{\beta-\alpha}\right)^{1/q}\|f\|_{\leb^{\alpha,q)}}\,.\]
\end{remark}
We point out that a
simple application of the Lebesgue dominated convergence theorem proves that
\begin{equation}\label{limit0}
\lim_{\ee\downarrow0}\ee^{\alpha/q}\|f\|_{q-\ee}=0\,,
\end{equation}
for all $f\in \leb^q\log^{-\alpha}\leb(\Omega)$, 
 see \cite{Gr}. Actually, \eqref {limit0} follows directly from \eqref{finitezza}, since it implies that the left hand side of \eqref{201309181} tends to $0$ as $\sigma\downarrow0$.\par
We stress that \eqref{limit0} does not hold uniformly, as $f$ varies in a bounded set of $\leb^q\log^{-\alpha}\leb(\Omega)$. Indeed, for each $\ee>0$ sufficiently small so that $\Phi(\E^{1/\ee})>1$, we choose a measurable subset $E\subset \Omega$ verifying 
($\Omega$ has no atoms)
\[|E|=|\Omega|\,\E^{-q/\ee}\log^\alpha(a+\E^{1/\ee})=|\Omega|/\Phi(\E^{1/\ee})\]
and set
\[f=f_\ee=\E^{1/\ee}\chi_E\,.\]
Then we find that $[f]_{\leb^q\log^{-\alpha}\leb}\equiv1$, while
\[
\begin{array}{rl}
\|f\|_{q-\ee}\kern-.7em&{}=\E^{1/\ee}\E^{-1/\ee\cdot q/(q-\ee)}\log^{\alpha/(q-\ee)}(a+\E^{1/\ee})\\
&{}=\E^{-1/(q-\ee)}\log^{\alpha/(q-\ee)}(a+\E^{1/\ee})
\end{array}
\]
and
\[\lim_{\ee\downarrow0}\ee^{\alpha/q}\|f\|_{q-\ee}=\E^{-1/q}\,.\]
\begin{lemma}\label{relcomp}
For each relatively compact subset $M\subset \leb^q\log^{-\alpha}\leb(\Omega)$, condition \eqref{limit0} holds uniformly for $f\in M$, that is
\[\lim_{\ee\downarrow 0}\left(\sup_{f\in M}\ee^{\alpha/q}\|f\|_{q-\ee}\right)=0\,.\]
\end{lemma}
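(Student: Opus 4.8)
The plan is to combine the total boundedness of $M$ with the continuous embedding $\leb^q\log^{-\alpha}\leb(\Omega)\hookrightarrow\leb^{\alpha,q)}(\Omega)$ expressed quantitatively by \eqref{immgranleb}. Fix $\delta>0$. Since $M$ is relatively compact it is totally bounded, so we may pick finitely many $f_{1},\dots,f_{k}\in M$ forming a $\delta$-net: for every $f\in M$ there is $i\in\{1,\dots,k\}$ with $\|f-f_{i}\|_{\leb^q\log^{-\alpha}\leb}<\delta$ (by Lemma \ref{nuova norma} it is immaterial which of the equivalent norms we use here).

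Given $f\in M$, choose such an index $i$. For every $\ee\in{}]0,\ee_{0}]$ the triangle inequality in $\leb^{q-\ee}(\Omega)$ (legitimate since $q-\ee\geqslant1$) yields
\[
\ee^{\alpha/q}\|f\|_{q-\ee}\leqslant\ee^{\alpha/q}\|f-f_{i}\|_{q-\ee}+\ee^{\alpha/q}\|f_{i}\|_{q-\ee}\,.
\]
By the very definition of the grand Lebesgue norm, $\ee^{\alpha/q}\|f-f_{i}\|_{q-\ee}\leqslant\|f-f_{i}\|_{\leb^{\alpha,q)}}$, and \eqref{immgranleb} bounds the latter by $C\,\|f-f_{i}\|_{\leb^q\log^{-\alpha}\leb}<C\delta$, where $C=C(q,\alpha,\ee_{0})$ does not depend on $f$, on $i$ or on $\ee$. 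For the second term we use that each $f_{i}$ is a \emph{fixed} function in $\leb^q\log^{-\alpha}\leb(\Omega)$, so \eqref{limit0} gives $\ee^{\alpha/q}\|f_{i}\|_{q-\ee}\to0$ as $\ee\downarrow0$; since the net is finite there is $\ee_{\delta}>0$ such that $\ee^{\alpha/q}\|f_{i}\|_{q-\ee}<\delta$ for all $i$ and all $\ee\in{}]0,\ee_{\delta}[$.

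Putting the two estimates together, for every $\ee\in{}]0,\min\{\ee_{\delta},\ee_{0}\}[$ and every $f\in M$ we obtain $\ee^{\alpha/q}\|f\|_{q-\ee}<(C+1)\delta$, hence $\sup_{f\in M}\ee^{\alpha/q}\|f\|_{q-\ee}\leqslant(C+1)\delta$; letting $\delta\downarrow0$ proves the lemma. There is no genuine difficulty here: the points deserving attention are that relative compactness be used precisely through the existence of a finite net, that the constant in \eqref{immgranleb} is uniform so that the error coming from the net is controlled simultaneously for all $f\in M$, and that \eqref{limit0}—which, as the example preceding the statement shows, cannot hold uniformly on arbitrary bounded sets—is invoked only for the finitely many centres $f_{1},\dots,f_{k}$.
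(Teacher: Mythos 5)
Your proof is correct and follows essentially the same route as the paper's: a finite $\delta$-net from total boundedness, the uniform bound $\ee^{\alpha/q}\|f-f_i\|_{q-\ee}\leqslant\|f-f_i\|_{\leb^{\alpha,q)}}\lesssim\|f-f_i\|_{\leb^q\log^{-\alpha}\leb}$ via \eqref{immgranleb}, and \eqref{limit0} applied only to the finitely many centres, combined by the triangle inequality. No gaps.
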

\begin{proof}
For simplicity, we assume $\ee_{0}=q-1$. As $M$ is totally bounded, fixed arbitrarily $\sigma>0$ we find a finite number of elements $f_1\,,\ldots,f_k\in M$ with the property that, $\forall f\in M$, $\exists j\in\{1\,,\ldots,k\}$:
\[\ee^{\alpha/q}\|f-f_j\|_{q-\ee}\leqslant \|f-f_j\|_{\leb^{\alpha,q)}}\leqslant \alpha^{1/q}\|f-f_j\|_{\leb^q\log^{-\alpha}\leb}<\sigma\,,\]
for all $\ee\in{}]0,\ee_0]$. Above, we used \eqref{immgranleb}. Moreover, $\exists \ee_\sigma\in{}]0,\ee_0]$ such that
\[\ee^{\alpha/q}\|f_j\|_{q-\ee}<\sigma\,,\qquad \forall \ee\in{}]0,\ee_\sigma[\,,\ \forall j\in\{1\,,\ldots,k\}\,.\]
Therefore, we conclude easily for any $f\in M$ and $\ee\in{}]0,\ee_\sigma[$
\[\ee^{\alpha/q}\|f\|_{q-\ee}\leqslant \ee^{\alpha/q}(\|f_j\|_{q-\ee}+\|f-f_j\|_{q-\ee})<2\sigma\,.\]
\end{proof}
In particular, if $(f_n)_{n\in\naturale}$ is a conveging sequence in $\leb^q\log^{-\alpha}\leb(\Omega)$, then
\[\lim_{\ee\downarrow 0}\left(\sup_n\ee^{\alpha/q}\|f_n\|_{q-\ee}\right)=0\,.\]
\subsection{Sobolev space $\sob_0(\Omega)$}
For a bounded domain $\Omega\subset\Rz^N$, let $\sob_0(\Omega)$ be the completion of $C_0^\infty(\Omega)$ with respect to the norm
\[\|u\|=\|\nabla u\|_{\leb^q\log^{-\alpha}\leb}\,.\]

\medskip

\section{Proof of Theorem \ref{main main} \wred{and Theorem \ref{main main 2intro}}}
Our goal in this section is to prove Theorem 
\ref{main main}.
\subsection{A fundamental lemma}
Assume that $\mathcal A=\mathcal A(x,\xi)$
satisfies \eqref{1.2}--\eqref{1.4}.
For $\psi \in \ssob^{1,1}(\Omega)$, we consider  the \wred{equations} 
\wred{
\begin{align}
\label{A harm sec 3}
\divergenza \mathcal A (x,\nabla u)   & = \divergenza f  	\qquad\text{in $  \Omega$},
\\
\label{A harm sec 3 bis}
\divergenza \mathcal A (x,\nabla v)   &  = \divergenza g 	\qquad\text{in $  \Omega$},
\end{align}
}
with $f,g \in \leb^{q-\varepsilon q} (\Omega,\mathbb R^n)$, $0<\varepsilon<1$.
Let \wred{$u,v \in \ssob ^{1,p-\varepsilon p}(\Omega)$} be solutions
to
\eqref{A harm sec 3} and \eqref{A harm sec 3 bis}
respectively
\wred{ 
such that 
\[
u- v \in \ssob_0^{1,p-\varepsilon p}(\Omega)
\]
}
Then
\begin{lemma}\label{uniform lemma}
There exists 
$0 < \varepsilon _ p   (n) < 
1/p$
\wred{
and
a constant $C>0$ depending on $n, p, \alpha, a$ and $b$
}
 such that
the
following uniform estimate holds
\begin{equation}\label{3.3}
\|
\nabla u -\nabla v
\|^p_{p-\varepsilon p}
\leqslant
C
\left(
\varepsilon ^{\frac p {p-2} }
\left\| 
|\nabla u|+|\nabla v|
\right\|^p_{p-\varepsilon p}
+
\left\| 
|f - g|
\right\|^q_{q-\varepsilon q}
\right),
\end{equation}
for every $0<\varepsilon < \varepsilon_p(n)$.
\end{lemma}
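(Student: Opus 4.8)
The plan is to pass to the difference $w=u-v\in\ssob_0^{1,p-\varepsilon p}(\Omega)$ (if $\|\nabla w\|_{p-\varepsilon p}=0$ there is nothing to prove) and to test the two equations against a function obtained by a Hodge decomposition. Since $w$ is only of class $\ssob^{1,p-\varepsilon p}$, the natural choice $w\,|\nabla w|^{-\varepsilon p}$ is not admissible; instead I would apply the Hodge decomposition to the field $|\nabla w|^{-\varepsilon p}\nabla w$, which belongs to $\leb^{s}(\Omega,\mathbb R^n)$ with $s=\frac{p-\varepsilon p}{1-\varepsilon p}$, because $|\nabla w|^{1-\varepsilon p}\in\leb^{s}(\Omega)$. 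This gives
\[
|\nabla w|^{-\varepsilon p}\nabla w=\nabla\phi+H,\qquad \phi\in\ssob_0^{1,s}(\Omega),\quad \divergenza H=0,
\]
together with the crucial estimate $\|H\|_{s}\leqslant C\varepsilon\,\|\nabla w\|_{p-\varepsilon p}^{1-\varepsilon p}$; the factor $\varepsilon$ is present precisely because at $\varepsilon=0$ the field $|\nabla w|^{-\varepsilon p}\nabla w$ is itself a gradient, so that $H=0$. Note that $s=\frac{r}{r-p+1}$ with $r=p-\varepsilon p$, so $\phi$ is an admissible test function for \eqref{A harm sec 3} and \eqref{A harm sec 3 bis} in the sense of the remark following Definition \ref{Def weak} (the identity \eqref{weak} extends from $C_0^\infty(\Omega)$ to $\ssob_0^{1,s}(\Omega)$ by density, since $\mathcal A(x,\nabla u),\mathcal A(x,\nabla v)\in\leb^{q-\varepsilon q}(\Omega,\mathbb R^n)$ by \eqref{1.3} and $q-\varepsilon q$ is the H\"older conjugate of $s$). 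The whole construction requires $s$ to stay in a fixed compact subinterval of $]1,\infty[$, which forces $\varepsilon<\varepsilon_p(n)$ for a threshold $\varepsilon_p(n)<1/p$ (this in turn guarantees $s<\infty$ and $q-\varepsilon q>1$).

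Next I would subtract the weak formulations of \eqref{A harm sec 3} and \eqref{A harm sec 3 bis} tested with $\phi$, and use $\nabla\phi=|\nabla w|^{-\varepsilon p}\nabla w-H$, to get
\[
\int_\Omega\scal{\mathcal A(x,\nabla u)-\mathcal A(x,\nabla v)}{\nabla w}\,|\nabla w|^{-\varepsilon p}\,dx=\int_\Omega\scal{\mathcal A(x,\nabla u)-\mathcal A(x,\nabla v)}{H}\,dx+\int_\Omega\scal{f-g}{\nabla\phi}\,dx.
\]
The left-hand side is bounded below by the monotonicity \eqref{1.4} and the elementary inequality $(|\nabla u|+|\nabla v|)^{p-2}\geqslant|\nabla w|^{p-2}$ (here $p\geqslant2$ is used), yielding a lower bound $\gtrsim\|\nabla w\|_{p-\varepsilon p}^{p-\varepsilon p}$. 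For the first term on the right I would use the Lipschitz bound \eqref{1.3}, namely $|\mathcal A(x,\nabla u)-\mathcal A(x,\nabla v)|\leqslant b\,|\nabla w|(|\nabla u|+|\nabla v|)^{p-2}$, then the generalized H\"older inequality with exponents $p-\varepsilon p$, $\frac{p-\varepsilon p}{p-2}$, $s$ (whose reciprocals add up to $1$) followed by the bound on $\|H\|_s$, obtaining $\lesssim\varepsilon\,\|\nabla w\|_{p-\varepsilon p}^{2-\varepsilon p}\,\||\nabla u|+|\nabla v|\|_{p-\varepsilon p}^{p-2}$. The datum term is handled by H\"older between $\leb^{q-\varepsilon q}$ and $\leb^{s}$, together with $\|\nabla\phi\|_{s}\leqslant(1+C\varepsilon)\|\nabla w\|_{p-\varepsilon p}^{1-\varepsilon p}$, giving $\lesssim\|f-g\|_{q-\varepsilon q}\,\|\nabla w\|_{p-\varepsilon p}^{1-\varepsilon p}$.

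Writing $X=\|\nabla w\|_{p-\varepsilon p}$, $Y=\||\nabla u|+|\nabla v|\|_{p-\varepsilon p}$, $F=\|f-g\|_{q-\varepsilon q}$, the previous step reads $X^{p-\varepsilon p}\lesssim\varepsilon\,X^{2-\varepsilon p}Y^{p-2}+F\,X^{1-\varepsilon p}$. Dividing by $X^{1-\varepsilon p}$ gives $X^{p-1}\lesssim\varepsilon\,X\,Y^{p-2}+F$; Young's inequality on the first summand (with exponents $p-1$ and $\frac{p-1}{p-2}$, where $p>2$ enters) lets me absorb a small multiple of $X^{p-1}$ and conclude $X^{p-1}\lesssim\varepsilon^{\frac{p-1}{p-2}}Y^{p-1}+F$; raising to the power $\frac{p}{p-1}=q$ yields $X^{p}\lesssim\varepsilon^{\frac{p}{p-2}}Y^{p}+F^{q}$, which is exactly \eqref{3.3}.

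The main obstacle is the Hodge decomposition estimate $\|H\|_{s}\leqslant C\varepsilon\|\nabla w\|_{p-\varepsilon p}^{1-\varepsilon p}$: one needs the constant $C$ uniform as $\varepsilon\downarrow0$ and the genuinely \emph{linear} dependence on $\varepsilon$ (not merely $o(1)$), which amounts to controlling the Calder\'on--Zygmund norms of the Helmholtz projection and the Lipschitz behaviour of $\xi\mapsto|\xi|^{-\varepsilon p}\xi$ near $\varepsilon=0$ for the exponent $s$ in a compact subset of $]1,\infty[$; this is what pins down the admissible range $\varepsilon<\varepsilon_p(n)<1/p$. Once that is granted, the remaining work is a careful bookkeeping of H\"older exponents, arranged so that $f-g$ is measured exactly in $\leb^{q-\varepsilon q}$ and the final power of $\varepsilon$ comes out as $\frac{p}{p-2}$.
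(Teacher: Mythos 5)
Your proposal is correct and follows essentially the same route as the paper: the Hodge decomposition of $|\nabla u-\nabla v|^{-\varepsilon p}(\nabla u-\nabla v)$ with the estimates $\|\nabla\varphi\|_{s}\lesssim\|\nabla u-\nabla v\|_{p-\varepsilon p}^{1-\varepsilon p}$ and $\|h\|_{s}\lesssim\varepsilon\|\nabla u-\nabla v\|_{p-\varepsilon p}^{1-\varepsilon p}$, testing both equations with $\varphi$, the lower bound from \eqref{1.4}, the H\"older/\eqref{1.3} bounds on the two right-hand terms, and Young's inequality to absorb and produce the exponent $\varepsilon^{p/(p-2)}$. The exponent bookkeeping (the generalized H\"older triple and the conjugacy of $q-\varepsilon q$ with $s$) matches the paper's computation exactly.
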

\begin{proof}[Proof of Lemma \ref{uniform lemma}]
The proof is 
achieved with a similar argument as in
\cite{GIS}.
We sketch it for the  sake of completeness.

\medskip

Since $\Omega $ is Lipschitz, we may use the 
Hodge decomposition of the vector field 
$|\nabla u - \nabla v|^{-\varepsilon p} (\nabla u - \nabla v) \in 
\leb^{ \frac{p-\varepsilon p}{1-\varepsilon p} }
(\Omega)$
(see \cite{IS,IS2}), namely
\begin{equation}
 	|\nabla u - \nabla v|^{-\varepsilon p} (\nabla u - \nabla v) 
=
\nabla \varphi + h,
\end{equation} 
for some 
$\varphi \in \ssob^{1,{\frac{p-\varepsilon p}{1-\varepsilon p}}}_0(\Omega)$
and some divergence free vector field
$h \in 
\leb^\frac{p-\varepsilon p}{1-\varepsilon p}
(\Omega)
$.
Moreover, 
\wred{
fixed
$0 < \varepsilon _ p   (n) < 
1/p$, 
for every 
$0 <\varepsilon < \varepsilon _ p   (n) $
}
the following estimates hold
(see \cite{IS2})
\begin{align}
\label{Hodge1}
\left\|  {\nabla \varphi} \right\| _  {\frac{p-\varepsilon p}{1-\varepsilon p}}
& \leqslant C(n,p)
\left\|   {\nabla u - \nabla v} \right\|^{1-\varepsilon p} _    {p-\varepsilon p} 
\\
\label{Hodge2}
\left\|  h \right\| _  {\frac{p-\varepsilon p}{1-\varepsilon p}}
& \leqslant C(n,p)
\varepsilon 
\left\|  {\nabla u - \nabla v} \right| ^{1-\varepsilon p} _    {p-\varepsilon p} 
\end{align}
From 
condition \eqref{1.4}
we obtain
\begin{equation}
\begin{split}
\left\|
\nabla u -\nabla v
\right\|^{p-\varepsilon p}_{p-\varepsilon p}
&\leqslant
\average 
\Omega
{\left( |\nabla u|+|\nabla v| \right)^{p-2} |\nabla u - \nabla v|^2 |\nabla u - \nabla v|^{-\varepsilon p}  }
x
\\
&\leqslant
\frac
{1}
{a}
\average
\Omega
{\scal {\mathcal A (x,\nabla u) - \mathcal A (x,\nabla v)}  {|\nabla u - \nabla v|^{-\varepsilon p} \left(\nabla u - \nabla v\right) } }
x
\end{split}
\end{equation}
By Definition \ref{Def weak},
we are legitimate to use $\varphi$ as a test function for 
equations 
in both 
\eqref{A harm sec 3}
and
\eqref{A harm sec 3 bis}
respectively.
Then
\begin{equation} \label{interm11}
\begin{split}
\left\| 
{\nabla u - \nabla v}
\right\|
_{ p-\varepsilon p } ^{p-\varepsilon p}
\leqslant
\frac
{1}
{a}
\biggr[
&
\average
\Omega
{\scal {f-g} {\nabla \varphi} }
x
\\
&
+
\average 
\Omega
{\scal {\mathcal A (x,\nabla u) - \mathcal A (x,\nabla v)}  {h} }
x
\biggr]
\end{split}
\end{equation} 
With the aid of condition \eqref{1.3}
and the
H\"older's inequality, we get
\begin{equation}
\begin{split}
\left\|
\nabla u - \nabla v
\right\|^{p-\varepsilon p}_{p-\varepsilon p}
\leqslant
\frac
{1}
{a}
\biggr[
&
\|
f-g
\|_{q-\varepsilon q}
\|
\nabla \varphi  
\|_{\frac {p-\varepsilon p} {1-\varepsilon p} }
\\
&
+
b
\|
|\nabla u|
+
|\nabla v|
\|^{p-2}_{p-\varepsilon p}
\|
|\nabla u
-
\nabla v|
\|_{p-\varepsilon p}
\|
h
\|_{\frac {p-\varepsilon p} {1-\varepsilon p} }
\biggr]
\end{split}
\end{equation}
which, in view of 
\eqref{Hodge1}
and
\eqref{Hodge2},
yields
\begin{equation}
\begin{split}
\left\|
\nabla u - \nabla v
\right\|^{p-\varepsilon p}_{p-\varepsilon p}
\leqslant
C
\biggr[
&
\|
f-g
\|_{q-\varepsilon q}
\|
  \nabla u - \nabla v
\|^{ 1-\varepsilon p } _{ p-\varepsilon p }  
\\
&
+
\varepsilon
\|
|\nabla u|
+
|\nabla v|
\|_{p-\varepsilon p}
^{p-2}
\|
|\nabla u
-
\nabla v|
\|^{2-\varepsilon p}_{p-\varepsilon p}
\biggr]
\end{split}
\end{equation}
where $C=C(n,p,a,b)$. 
With the aid of Young's inequality we obtain
\begin{equation}
\begin{split}
\left\|
\nabla u - \nabla v
\right\|^{p-1}   _{p-\varepsilon p}
&
\leqslant
C
\|
f
-
g
\|_{q-\varepsilon q}
+
C
\varepsilon
\|
|\nabla u|
+
|\nabla v|
\|_{p-\varepsilon p}
^{p-2}
\left\|
\nabla u - \nabla v
\right\|_{p-\varepsilon p}
\\
&
\leqslant
C
\left(
\|
f
-
g
\|_{q-\varepsilon q}
+
\varepsilon ^ {\frac {p-1} {p-2} }
\|
|\nabla u|
+
|\nabla v|
\|_{p-\varepsilon p}
^{p-1}
\right) 
\\
&
\quad
+ 
\frac 1 {(p-1) 2^{p-1}}    
\left\|
\nabla u - \nabla v
\right\|^{p-1}_{p-\varepsilon p}
\end{split}
\end{equation}
Once the latter term is absorbed by the left hand side, we have
\begin{equation}\label{3.12}
\begin{split}
\left\|
\nabla u - \nabla v
\right\|^{p-1}   _{p-\varepsilon p}
&
\leqslant
C
\left(
\|
f
-
g
\|_{q-\varepsilon q}
+
\varepsilon ^ {\frac {p-1} {p-2} }
\|
|\nabla u|
+
|\nabla v|
\|_{p-\varepsilon p}
^{p-1}
\right)  
\end{split}
\end{equation}
which corresponds to the estimate we wanted to prove.
\end{proof}
\begin{corollary}
Under the assuptions of Lemma 3.1, if \wred{$u=v$ on $\partial \Omega$}, 
there exists \wred{\wred{$0<\varepsilon_0<1/p$} and
a constant $C>0$ depending on $n, p, \alpha, a$ and $b$} 
such that,
for any $0<\varepsilon<\varepsilon_0$
the
following uniform estimate holds
\begin{equation}\label{3.3bis}
\|
\nabla u -\nabla v
\|^p_{p-\varepsilon p}
\leqslant
C
\left(
\varepsilon ^{\frac p {p-2} }
\left\| 
|f|+|g|
\right\|^q_{q-\varepsilon q}
+
\left\| 
f - g 
\right\|^q_{q-\varepsilon q}
\right),
\end{equation}
\end{corollary}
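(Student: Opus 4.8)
The plan is to derive \eqref{3.3bis} from the already established inequality \eqref{3.3} of Lemma \ref{uniform lemma}, by controlling the gradient term $\||\nabla u|+|\nabla v|\|_{p-\varepsilon p}$ on its right-hand side in terms of the data $f$ and $g$ alone. First I would observe that, thanks to the additional boundary condition, $u$ and $v$ belong to $\ssob^{1,p-\varepsilon p}_0(\Omega)$, so Lemma \ref{uniform lemma} may be applied to the pair $(u,0)$ with datum $(f,0)$ — the null function being a solution of $\divergenza\mathcal A(x,\nabla w)=0$ — and symmetrically to $(v,0)$ with datum $(g,0)$. Estimate \eqref{3.3} then becomes
\[
\|\nabla u\|^p_{p-\varepsilon p}\leqslant C\Big(\varepsilon^{\frac p{p-2}}\|\nabla u\|^p_{p-\varepsilon p}+\|f\|^q_{q-\varepsilon q}\Big),
\]
and likewise for $v$ and $g$, with $C=C(n,p,a,b)$ independent of $\varepsilon$.

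Next I would fix $\varepsilon_0\in{}]0,\varepsilon_p(n)[$ so small that $C\,\varepsilon^{\frac p{p-2}}\leqslant\frac12$ for every $0<\varepsilon<\varepsilon_0$; this lets me absorb the first term on the right into the left-hand side, obtaining $\|\nabla u\|^p_{p-\varepsilon p}\leqslant 2C\|f\|^q_{q-\varepsilon q}$ and $\|\nabla v\|^p_{p-\varepsilon p}\leqslant 2C\|g\|^q_{q-\varepsilon q}$ for all $\varepsilon<\varepsilon_0$. Since the maps $t\mapsto t^{p-\varepsilon p}$ and $t\mapsto t^{p/(p-\varepsilon p)}$ are convex (both exponents being $\geqslant1$ as $\varepsilon<1/p$), routine estimates give $\||\nabla u|+|\nabla v|\|^p_{p-\varepsilon p}\leqslant c(n,p)\big(\|\nabla u\|^p_{p-\varepsilon p}+\|\nabla v\|^p_{p-\varepsilon p}\big)$ with $c(n,p)$ independent of $\varepsilon$; combining this with the two bounds just obtained and with $\|f\|_{q-\varepsilon q},\|g\|_{q-\varepsilon q}\leqslant\||f|+|g|\|_{q-\varepsilon q}$ yields
\[
\||\nabla u|+|\nabla v|\|^p_{p-\varepsilon p}\leqslant C\,\||f|+|g|\|^q_{q-\varepsilon q}.
\]
Inserting this into \eqref{3.3} and enlarging $C$ once more gives \eqref{3.3bis}.

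The only delicate point is the absorption argument, and it hinges on a fact that must be checked with care: the constant $C$ produced by Lemma \ref{uniform lemma} depends on $n,p,a,b$ but \emph{not} on $\varepsilon$, so the smallness threshold $\varepsilon_0$ can be fixed once and for all, depending only on $n,p,a,b$. Note also that the absorption has to be performed at the level of the a priori estimates for $u$ and for $v$ separately, \emph{before} feeding the resulting bound back into \eqref{3.3}, since \eqref{3.3} itself still carries the gradient norms on its right-hand side; trying to absorb directly in \eqref{3.3} would be circular.
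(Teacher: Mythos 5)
Your proposal is correct and follows essentially the same route as the paper: the authors also specialize the lemma to the pair $(u,0)$ (and $(v,0)$), absorb the $\varepsilon^{\frac{p-1}{p-2}}\|\nabla u\|^{p-1}_{p-\varepsilon p}$ term for $\varepsilon$ small to get the a priori bounds \eqref{3.14 bis}--\eqref{3.14 ter}, and then feed these back into \eqref{3.12}. The only cosmetic difference is that you perform the absorption at the level of the $p$-th powers rather than the $(p-1)$-th powers, and you are right to stress that the absorption must be done in the separate a priori estimates for $u$ and $v$, not directly in \eqref{3.3}.
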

\begin{proof}
For $g=0$ and $v=0$,
estimate \eqref{3.3}
reduces to
\begin{equation}\label{3.13}
\begin{split}
\left\|
\nabla u  
\right\|^{p-1}   _{p-\varepsilon p}
&
\leqslant
C
\left(
\|
f
\|_{q-\varepsilon q}
+
\varepsilon ^ {\frac {p-1} {p-2} }
\| 
\nabla u 
\|_{p-\varepsilon p}
^{p-1}
\right)  
\end{split}
\end{equation}
which gives, for  $C \varepsilon^{\frac {p-1} {p-2} } <1$ 
\begin{equation}\label{3.14 bis}
\begin{split}
\left\|
\nabla u  
\right\|^{p-1}   _{p-\varepsilon p}
&
\leqslant
C
\|
f
\|_{q-\varepsilon q} ^{q-1}
\end{split}
\end{equation}
Similarly, one has
\begin{equation}\label{3.14 ter}
\begin{split}
\left\|
\nabla v  
\right\|^{p-1}   _{p-\varepsilon p}
&
\leqslant
C
\|
g
\|_{q-\varepsilon q} ^{q-1}
\end{split}
\end{equation}
Inserting 
\eqref{3.14 bis}
and
\eqref{3.14 ter}
into
\eqref{3.12}, we finally get \eqref{3.3bis}.
\end{proof}

\subsection{Uniqueness}
Under the assumptions of Theorem \ref{main main},
if $f=g$, estimate \eqref{3.3bis}    reduces to
\begin{equation}\label{3.11}
\|
\nabla u -\nabla v
\|^p_{p-\varepsilon p}
\leqslant
C
\varepsilon ^{\frac p {p-2} }
\left\| 
 f 
\right\|^q_{q-\varepsilon q}
.
\end{equation}
Then, if $f \in \leb^q \log^{-\alpha} \leb (\Omega,\mathbb R^n)$, $0 < \alpha \leqslant p/(p-2)$,
uniqueness follows from \eqref{limit0}
letting $\varepsilon \rightarrow 0^+$ in \eqref{3.11}.
Actually, we can prove a stronger uniqueness result.

\begin{theorem}\label{main main 2}
Assume \eqref{1.2}--\eqref{1.4} hold.
There exist $ s \in (p-1/p,p)$ \wred{depending only on $n, p,  a$ and  $b$}, such that 
if $u,v \in \ssob^{1,1}(\Omega)$
satisfy 
$u-v \in \ssob_0^{1,1}(\Omega)$,
$\nabla u \in \leb ^p \log^{-\alpha} \leb (\Omega,\mathbb R^n)$, 
$0<\alpha \leqslant   p/(p-2)$, $\nabla v \in \leb^s(\Omega,  \mathbb R^n)$ and
\begin{equation}
\divergenza \mathcal A (x,\nabla u)
= 
\divergenza \mathcal A (x,\nabla v)
\end{equation}
then $u=v$ in $\Omega$.
\end{theorem}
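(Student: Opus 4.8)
The plan is to bootstrap from the quantitative estimate \eqref{3.11} together with the limiting property \eqref{limit0}, but with the extra care demanded by the asymmetry of the hypotheses: here $u$ is only assumed to have gradient in the Orlicz--Zygmund class, while $v$ lives merely in a Lebesgue space $\leb^s$ with $s$ slightly below $p$. First I would fix $s = p - \varepsilon_p(n)\,p$ (equivalently choose $\varepsilon$ to be the threshold $\varepsilon_p(n)$ from Lemma \ref{uniform lemma}, shrunk if necessary so that the Hodge estimates \eqref{Hodge1}--\eqref{Hodge2} and the absorption in the proof of Lemma \ref{uniform lemma} are valid), so that both $\nabla u$ and $\nabla v$ lie in $\leb^{p-\varepsilon p}(\Omega)$ for every $\varepsilon$ with $\varepsilon_p(n)/2 \leqslant \varepsilon < \varepsilon_p(n)$, say; the inclusion $\leb^p\log^{-\alpha}\leb(\Omega) \subset \leb^{p-\varepsilon p}(\Omega)$ handles $\nabla u$, and $\nabla v \in \leb^s$ handles $v$ directly. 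The point of introducing $s$ is precisely to guarantee we are in the range where the fundamental lemma applies to the pair $(u,v)$.

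Next, since $u - v \in \ssob_0^{1,1}(\Omega)$ and both gradients are in $\leb^{p-\varepsilon p}$, we actually have $u - v \in \ssob_0^{1,p-\varepsilon p}(\Omega)$, so the hypotheses of Lemma \ref{uniform lemma} (with $f = g$, i.e.\ the same vector field on both sides) are met for all $\varepsilon$ in the admissible window. Applying \eqref{3.3} with $f = g$ gives
\begin{equation*}
\|\nabla u - \nabla v\|^p_{p-\varepsilon p} \leqslant C\,\varepsilon^{\frac{p}{p-2}}\,\bigl\||\nabla u| + |\nabla v|\bigr\|^p_{p-\varepsilon p}.
\end{equation*}
The difficulty is that, unlike in the Uniqueness subsection, I cannot simply absorb $\|\nabla v\|_{p-\varepsilon p}$ via an a priori bound in terms of the datum, because $v$ is not assumed to solve a problem with a Zygmund-regular right-hand side — it is only known a priori to be in $\leb^s$. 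So instead I would argue as follows: on the right-hand side, $\|\nabla u\|_{p-\varepsilon p}$ is controlled uniformly in $\varepsilon$ by $\|\nabla u\|_{\leb^p\log^{-\alpha}\leb}$ (via the embedding into the grand Lebesgue space, inequality \eqref{immgranleb}, which gives $\varepsilon^{\alpha/q}\|\nabla u\|_{q-\varepsilon}$-type bounds — here with exponent $p$), while $\|\nabla v\|_{p-\varepsilon p} \leqslant \|\nabla v\|_{s}$ is simply constant in $\varepsilon$. Hence the whole right-hand side is bounded by $C\,\varepsilon^{p/(p-2)}\bigl(\varepsilon^{-\alpha}\|\nabla u\|^p_{\leb^p\log^{-\alpha}\leb} + \|\nabla v\|^p_s\bigr)$, and since $0 < \alpha \leqslant p/(p-2)$ the first term carries a nonnegative power of $\varepsilon$ and the second a strictly positive one, so the entire bound tends to $0$ as $\varepsilon \downarrow \varepsilon_p(n)/2$... which is not $0$.

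So the argument must instead exploit the genuine letting-$\varepsilon$-to-zero mechanism, which forces a preliminary self-improvement step: I would first show that $\nabla v$ in fact lies in $\leb^{p-\varepsilon p}(\Omega)$ for \emph{all} small $\varepsilon > 0$ (not just near the threshold) — this is where the real work is. One route: since $u - v \in \ssob_0^{1,1}$, write $v = u - (u-v)$; estimate \eqref{3.3bis} (the Corollary), or rather a comparison argument directly from equation \eqref{weak} tested against the Hodge-decomposed vector field built from $\nabla u - \nabla v$, yields a Gehring-type higher-integrability gain for $\nabla u - \nabla v$ propagating downward from exponent $s$: once $\|\nabla u - \nabla v\|_{p - \varepsilon p}$ is finite for one $\varepsilon$, the fundamental lemma bounds it by $C\varepsilon^{p/(p-2)}(\||\nabla u|+|\nabla v|\|_{p-\varepsilon p})^p$, and feeding $\nabla v = \nabla u - (\nabla u - \nabla v)$ back in, together with $\nabla u \in \leb^p\log^{-\alpha}\leb \subset \bigcap_{\varepsilon>0}\leb^{p-\varepsilon p}$, a fixed-point/iteration on the scale of exponents shows $\nabla u - \nabla v \in \leb^{p-\varepsilon p}$ for every $\varepsilon \in (0,\varepsilon_p(n))$. \emph{This self-improvement is the main obstacle} and is exactly the technical heart that distinguishes Theorem \ref{main main 2} from the easy uniqueness statement. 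Once it is in place, I apply \eqref{3.11} (equivalently \eqref{3.3} with $f=g=0$ structure) for \emph{every} $\varepsilon \in (0,\varepsilon_p(n))$, use that $\varepsilon^{p/(p-2)}\||\nabla u|+|\nabla v|\|^p_{p-\varepsilon p} \to 0$ as $\varepsilon \downarrow 0$ — which follows from \eqref{limit0} applied to $\nabla u$ (since $\alpha \leqslant p/(p-2)$ the power of $\varepsilon$ is at least $\varepsilon^{\alpha \cdot (\text{something})}$ matching the Zygmund scaling) and from the fact that $\|\nabla u - \nabla v\|_{p-\varepsilon p}$ is now also controlled in the same limiting fashion — to conclude $\|\nabla u - \nabla v\|_{p-\varepsilon p} \to 0$, hence $\nabla u = \nabla v$ a.e., and therefore $u = v$ since $u - v \in \ssob_0^{1,1}(\Omega)$.
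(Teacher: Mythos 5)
Your final argument is essentially the paper's proof. The paper runs the Hodge--decomposition estimate of Lemma \ref{uniform lemma} with $f=g$, replaces $\left\||\nabla u|+|\nabla v|\right\|_{p-\varepsilon p}$ by $\|\nabla u-\nabla v\|_{p-\varepsilon p}+2\|\nabla u\|_{p-\varepsilon p}$ (your ``feeding $\nabla v=\nabla u-(\nabla u-\nabla v)$ back in''), absorbs the difference term into the left-hand side for $C\varepsilon^{p/(p-2)}<1$ to obtain \eqref{3.15}, and then lets $\varepsilon\downarrow 0$, invoking \eqref{limit0} for $\nabla u$ alone; the restriction $\alpha\leqslant p/(p-2)$ is exactly what makes $\varepsilon^{p/(p-2)}\|\nabla u\|^p_{p-\varepsilon p}$ tend to $0$. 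Your opening attempt (freezing $\varepsilon$ near $\varepsilon_p(n)$) is rightly discarded.

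The one divergence is the ``self-improvement'' step you single out as the technical heart. The paper does not perform it: estimate \eqref{3.14} is simply asserted for all $0<\varepsilon<\varepsilon_p(n)$, the hypothesis $\nabla v\in\leb^s$ with $s$ close to $p$ being what licenses the Hodge decomposition of $|\nabla u-\nabla v|^{-\varepsilon p}(\nabla u-\nabla v)$ and the ensuing duality pairings. Your instinct that the admissibility of small $\varepsilon$ deserves justification when $\nabla v$ is only known in $\leb^s$ is reasonable, but the iteration you sketch cannot close as written: \eqref{3.3} returns the norm $\|\cdot\|_{p-\varepsilon p}$ at the \emph{same} exponent on both sides, so substituting $\nabla v=\nabla u-(\nabla u-\nabla v)$ repeatedly yields no gain of integrability; a genuine exponent improvement would require a separate Gehring/reverse-H\"older mechanism, which neither you nor the paper supplies. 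Since the paper's own proof proceeds without any such bootstrap, you should either drop that step or replace it by the direct absorption argument above, which is all that is actually used.
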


\begin{proof}[Proof of Theorem \ref{main main 2}]
Arguing 
as
in Lemma \ref{uniform lemma},
we decompose
the vector field 
$|\nabla u - \nabla v|^{-\varepsilon p} (\nabla u - \nabla v) \in 
\leb^{ \frac{p-\varepsilon p}{1-\varepsilon p} }
(\Omega)$
and  
for $f=g$ we get the following estimate
\begin{equation}
\left\|
\nabla u - \nabla v
\right\|_{p-\varepsilon p} ^p
\le 
C\varepsilon ^ {\frac p { p-2} }
\left\|
|\nabla u| + |\nabla v|
\right\|_{p-\varepsilon p}  ^p
\end{equation}
which yields
\begin{equation}\label{3.14}
\left\|
\nabla u - \nabla v
\right\|_{p-\varepsilon p} ^p
\le 
C\varepsilon ^ { \frac  {p} {p-2}  }
\left(
\left\|
\nabla u - \nabla v 
\right\|_{p-\varepsilon p} ^ {p}
+
\left\|
\nabla u   
\right\|_{p-\varepsilon p} ^ {p}
\right)
\end{equation}
for $0 < \varepsilon <\varepsilon_p(n) $
and
$C=C(n,p, a ,b)$.
Now, if 
$0 < \varepsilon < \min \left\{ \varepsilon_p(n), 1/C^{\frac {p-2}{p}}  \right\} $, the first term in the right hand side can be absorbed by the left hand side of \eqref{3.14} and so
\begin{equation}\label{3.15}
\left\|
\nabla u - \nabla v
\right\|_{p-\varepsilon p} ^p
\le 
\left(
\frac
{C\varepsilon}
{1 - C \varepsilon}
\right)
^{ \frac p {p-2} }
\left\|
\nabla u   
\right\|_{p-\varepsilon p}   ^p
\end{equation}
The conclusion of Theorem \ref{main main 2} follows by \eqref{limit0}, as $\varepsilon \rightarrow 0^+$ in \eqref{3.15}.
\end{proof}

The previous theorem improves  the uniqueness result of
\cite{GIS}, which does not cover the case $\alpha=  p/(p-2)$.
We point ou that  
our result also improves
the result in \cite{DHM}, since the 
Marcinkiewicz 
space $weak-\leb^p$ is contained in $\leb^p \log^{-\alpha} \leb $ when $1<\alpha \le p/(p-2)$.
Actually, estimate  \eqref{3.3}   
allows us to give a simple proof of \cite[Theorem 4.2]{DHM}.  
Arguing as in Theorem 
\ref{main main 2}
we arrive at 
\eqref{3.14}
for 
$|\nabla u|
\in
\leb^{p,\infty} (\Omega)
$
and 
$|\nabla v|
\in
\leb^{s} (\Omega)
$.
Then, by H\"older's inequality
\eqref{Holder Mar}
we get
\begin{equation}\label{Mar 1}
\left\|
\nabla u - \nabla v
\right\|_{p-\varepsilon p}
^p
\leqslant    
C 
\varepsilon^{\frac p {p-2} -1 }
\|
\nabla u
\|^p_{p,\infty}
\end{equation} 
and 
letting $\varepsilon \rightarrow 0^+$
we have $u=v$ in $\Omega$.
\subsection{Existence}
Let 
$f\in \leb^q\log^{-\alpha}\leb(\Omega,\Rz^N)$, 
$1<q<2$ and $0<\alpha\leqslant p/(p-2)$.
The aim of this subsection
is to prove the existence in Theorem \ref{main main}.
\wred{As a preliminary step, we show}
that, if $(f_n)_n$ is a converging sequence in $\leb^q\log^{-\alpha}\leb(\Omega,\Rz^N)$, such that 
for each $n$
\begin{equation}\label{pb approssimanti}
\left\{
\begin{array}l
\divergenza
\mathcal A (x,  \nabla u_n )  =\divergenza f_n\\
\wred{u_n = 0 \quad \text{on $\partial \Omega$} }  
\end{array}
\right.
\end{equation}
then $(\nabla u_n)_n$ is a Cauchy sequence in $\leb^p\log^{-\alpha}\leb(\Omega,\Rz^N)$. 
To prove this, we first note that, by Lemma~\ref{relcomp}, |wred{fixed} $  \sigma>0$, we find $\vartheta\in{}]0,1]$ such that, if $0<\ee<\vartheta \ee_p(n)$, then
\[\ee^{\alpha}\||f_m|+|f_n|\|_{q-\ee q}^q<\sigma\,,\]
for all $m,n\in\naturale$. Hence
%
%
%
\wred{\eqref{3.3bis}}
with $f_{m}$, $f_{n}$ in place of $f$, $g$, and $u_{m}$, $u_{n}$ in place of $u$, $v$, respectively, yields
\begin{equation}\label{stimaGIS2}
\|\nabla u_m-\nabla u_n\|_{p-\ee p}^p\lesssim \sigma+\|f_m-f_n\|_{q-\ee q}^q\,.
\end{equation}
We multiply both sides by $\ee^{\alpha-1}$ and integrate with respect to $\ee$ on $(0,\vartheta\ee_p(n))$. For $\delta=\ee p/\vartheta\geqslant\ee p$, we have
\[\|\nabla u_m-\nabla u_n\|_{p-\ee p}\geqslant \|\nabla u_m-\nabla u_n\|_{p-\delta}\,,\]
hence
\begin{equation}\label{201309182}
\int_0^{\vartheta\ee_p(n)}\ee^{\alpha-1}\|\nabla u_m-\nabla u_n\|_{p-\ee p}^p\,d\ee\geqslant 
\left(\frac \vartheta p\right)^\alpha\!\int_0^{\ee_0}\delta^{\alpha-1}\|\nabla u_m-\nabla u_n\|_{p-\delta}^p\,d\delta\,,
\end{equation}
where $\ee_0=p\ee_p(n)$. On the other hand,
\[\int_0^{\vartheta\ee_p(n)}\ee^{\alpha-1}\,d\ee=\frac{(\vartheta\ee_p(n))^\alpha}\alpha\]
and (setting here $\delta=\ee q$)
\begin{equation}\label{201309183}
\int_0^{\vartheta\ee_p(n)}\ee^{\alpha-1}\|f_m-f_n\|_{q-\ee q}^q\,d\ee\leqslant 
q^{-\alpha}\int_0^{\ee_0}\delta^{\alpha-1}\|f_m-f_n\|_{q-\delta}^q\,d\delta\,.
\end{equation}
Therefore, recalling definition \eqref{norma equiv}, from \eqref{stimaGIS2} we get
\begin{equation}\label{stimaGIS3}
\|\nabla u_m-\nabla u_n\|_{\leb^p\log^{-\alpha}\leb}^p\lesssim \sigma+\vartheta^{-\alpha}\|f_m-f_n\|_{\leb^q\log^{-\alpha}\leb}^q
\end{equation}
with no restrictions on $m,n\in\naturale$. Now, as the sequence $(f_n)_n$ conveges in $\leb^q\log^{-\alpha}\leb(\Omega)$, we have
\[\vartheta^{-\alpha}\|f_m-f_n\|_{\leb^q\log^{-\alpha}\leb}^q<\sigma\,,\]
provided $m$ and $n$ are sufficiently large, hence
\[\|\nabla u_m-\nabla u_n\|_{\leb^p\log^{-\alpha}\leb}^p\lesssim \sigma\]
proving that $(\nabla u_n)_n$ is a Cauchy sequence as desired.\par

\medskip

Now we are in a position to prove existence of solution for problem \eqref{A harm intro f}. Indeed, we approximate the vector field $f$ in the right hand side of the equation by $f_n\in \leb^q(\Omega,\Rz^N)$, $n=1\,,2\,,\ldots$, such that $f_n\to f$ in $\leb^q\log^{-\alpha}\leb(\Omega,\Rz^N)$, and for each $n$ we consider the (unique) solution $u_n$ to the problem
\begin{equation}\label{pb approssimanti2}
\left\{
\begin{array}l
\divergenza  \mathcal A (x,  \nabla u_n )    =\divergenza f_n\\
u_n\in \ssob^{1,p}_0(\Omega)
\end{array}
\right.
\end{equation}
Using what we have seen above, $(u_n)_n$ converges in $\sob^p_0(\Omega)$, that is, there exists $u\in \sob^p_0(\Omega)$ such that $u_n\to u$. To conclude that $u$ solves
\eqref{A harm intro f}, 
we only need to note, 
that
by
\eqref{1.3}
 we can pass to the limit as $n\to \infty$ into the equation of \eqref{pb approssimanti2}, getting
\[\divergenza     \mathcal A (x,  \nabla u )     =\divergenza f\,,\]
since  %
$\nabla u_n\to\nabla u$ in $\leb^{p-1}(\Omega,\Rz^N)$ in particular.\par
The 
estimate
\wred{\eqref{tag eqn}}
follows from \eqref{3.14 bis}, \wred{by} the same argument used above, by integrating with respect to $\varepsilon$.\par
\wred
{
Also continuity of the operator $\mathcal H$ follows. Indeed, clearly $f_n \rightarrow f$ in 
$\leb ^ q \log^{-\alpha} \leb $ implies $\nabla u_n=\mathcal H f_n \rightarrow \nabla u = \mathcal H$ in  $\leb ^ p \log^{-\alpha} \leb $. \qed
}

\begin{proof}[Proof of Theorem \ref{main main 2intro}]

Let now $0<\alpha<p/(p-2)$
and let
$f,g \in \leb^q \log ^{-\alpha} \leb (\Omega,\mathbb R^n)$. Denote by $u$ and $v$ the solutions of 
\eqref{A harm sec 3} and 
\eqref{A harm sec 3 bis}, of class $\ssob^1 \leb ^p \log ^{-\frac p {p-2}} \leb (\Omega)$, respectively.
To prove \eqref{3.3 ter},
we multiply both sides of 
\eqref{3.3bis}
by $\ee^{\alpha-1}$ and integrate with respect to $\ee$ on $(0,\vartheta\ee_p(n))$, for fixed $\vartheta\in{}]0,1]$. Similarly as for \eqref{201309182} and \eqref{201309183}, we have
\begin{equation}\label{201309185}
\int_0^{\vartheta\ee_p(n)}\ee^{\alpha-1}\|\nabla u-\nabla v\|_{p-\ee p}^p\,d\ee\geqslant 
\left(\frac \vartheta p\right)^\alpha\!\int_0^{\ee_0}\delta^{\alpha-1}\|\nabla u-\nabla v\|_{p-\delta}^p\,d\delta\,,
\end{equation}
\begin{equation}\label{201309186}
\int_0^{\vartheta\ee_p(n)}\ee^{\alpha-1}\|f-g\|_{q-\ee q}^q\,d\ee\leqslant 
q^{-\alpha}\int_0^{\ee_0}\delta^{\alpha-1}\|f-g\|_{q-\delta}^q\,d\delta\,.
\end{equation}
respectively. On the other hand,
\begin{equation}\label{201309184}
\int_0^{\vartheta\ee_p(n)}\ee^{\frac p{p-2}+\alpha-1}\||f|+|g|\|_{q-\ee q}^q\,d\ee\leqslant 
\frac{(\vartheta\ee_p(n))^{\frac p{p-2}}}{q^{\alpha}}
\int_0^{\ee_0}\delta^{\alpha-1}\||f|+|g|\|_{q-\delta}^q\,d\delta
\end{equation}
and therefore we get
\begin{equation}\label{stimaGIS5}
\|\nabla u-\nabla v\|_{\leb^p\log^{-\alpha}\leb}^p\lesssim \vartheta^{\frac p{p-2}-\alpha}\||f|+|g|\|_{\leb^q\log^{-\alpha}\leb}^q+\vartheta^{-\alpha}\|f-g\|_{\leb^q\log^{-\alpha}\leb}^q
\end{equation}
For
\[\vartheta^{\frac p{p-2}}=\frac{\|f-g\|_{\leb^q\log^{-\alpha}\leb}^q}{\||f|+|g|\|_{\leb^q\log^{-\alpha}\leb}^q}\]
we obtain estimate \eqref{3.3 ter}. 
In particular, for $g=0$ and $v=0$,
\begin{equation}\label{201309201}
\|\nabla u\|_{\leb^p\log^{-\alpha}\leb}^p\lesssim 
\|f\|_{\leb^q\log^{-\alpha}\leb}^q\,.
\end{equation}
\end{proof}

\begin{remark}
Assume $f$ and $g$ in $\leb^q\log^{-p/(p-2)}\leb(\Omega,\Rz^N)$, and let $u$ and $v$ in $\Sob{p/(p-2)}_0(\Omega)$ solve 
\wred
{
\eqref{A harm sec 3} and 
\eqref{A harm sec 3 bis} respectively.
}
For $0\leqslant \alpha<p/(p-2)$, we can prove that
\[f-g\in \leb^q\log^{-\alpha}\leb(\Omega,\Rz^N)\implica \nabla u-\nabla v\in \leb^p\log^{-\alpha}\leb(\Omega,\Rz^N)\,.\]
Indeed, in the case $\alpha=0$, passing to the limit as $\ee\downarrow 0$ in \wred{\eqref{3.3}}, 
we find that $\nabla u-\nabla v\in \leb^p(\Omega,\Rz^N)$ and
\begin{equation}\label{stimaGIS4}
\|\nabla u-\nabla v\|_p^p\lesssim \|f-g\|_q^q\,.
\end{equation}
In the case $0<\alpha<p/(p-2)$, similarly as for \eqref{201309184} we find ($\vartheta=1$)
\begin{equation}\label{201309187}
\int_0^{ \ee_p(n)}\ee^{\frac p{p-2}+\alpha-1}\||f|+|g|\|_{q-\ee q}^q\,d\ee\leqslant 
\frac{(\ee_p(n))^{\alpha}}{q^{\frac p{p-2}}}
\int_0^{\ee_0}\delta^{\frac p{p-2}-1}\||f|+|g|\|_{q-\delta}^q\,d\delta\,.
\end{equation}
By \eqref{201309185}, \eqref{201309186} and \eqref{201309187} we get
\begin{equation}\label{201309188}
\|\nabla u-\nabla v\|_{\leb^p\log^{-\alpha}\leb}^p\lesssim \||f|+|g|\|_{\leb^q\log^{-\frac p{p-2}}\leb}^q+\|f-g\|_{\leb^q\log^{-\alpha}\leb}^q
\end{equation}
\end{remark}

\begin{proof}[Proof of Theorem \ref{comparison}] 
Under assumption 
\eqref{unif ell}
it is easy to verify that 
$\mathcal A(x,\xi)$ defined in \eqref{matric matcal A}
satisfies assuptions \eqref{1.2}--\eqref{1.4} with $\lambda=a$.
By arguing as in the proof of Lemma 3.1, as in \cite{FM} we get
\begin{equation}\label{4.1}
\|
\nabla u - \nabla v
\|_{p-\varepsilon p}^p
\leqslant 
C(n,p,a,b)
\left\{
\left(K_A-1\right)^{\frac p {p-1}} \| \nabla v \|^p_{p-\varepsilon p} + \varepsilon ^{\frac{p}{p-2}}
\left\|  |\nabla u|+|\nabla v| \right\|_{p-\varepsilon p}^p 
\right\}
\end{equation}
which holds true as long as $\varepsilon \in (0,\varepsilon_p(n)) $ for some $\varepsilon_p(n)>0$.
Let us fix some $\vartheta\in(0,1)$ which will be properly choosen later.
Let us consider the integrals
\begin{equation}
\begin{split}
I_1 & =\int_0^{\vartheta \varepsilon_p(n)} \varepsilon ^ {\alpha-1} \| \nabla u -\nabla v \|^p_{p-\varepsilon p} d\varepsilon
\\
I_2 & =\int_0^{\vartheta \varepsilon_p(n)} \varepsilon ^ {\alpha-1} \|  \nabla v \|^p_{p-\varepsilon p} d\varepsilon
\\
I_3 & =\int_0^{\vartheta \varepsilon_p(n)} \varepsilon ^ {\frac p {p-2} +\alpha-1} \left\| |\nabla u| + |\nabla v| \right\|^p_{p-\varepsilon p} d\varepsilon
\end{split}
\end{equation}
so that estimate \eqref{4.1}
infers
\begin{equation}\label{4.1 bis}
I_1 \leqslant C(n,p,a,b) 
\left\{
\left(K_A-1\right)^{\frac p {p-1}} I_2 +  I_3
\right\}
\end{equation}
We set 
\[
\delta =\frac {\varepsilon p} \vartheta
\]
Since $\delta  \geqslant   {\varepsilon p}$, a use of Holder's inequality
allow us to obtain
\begin{equation}\label{4.2}
\begin{split}
I_1 
&
\geqslant  
\int_0^{\vartheta \varepsilon_p(n)} \varepsilon ^ {\alpha-1} \| \nabla u -\nabla v \|^p_{p-\delta  } d\varepsilon
\\
&
=
\left(
\frac \vartheta p
\right)^\alpha
\int_0^{p\varepsilon_p(n)}
\delta ^ {\alpha-1} \| \nabla u -\nabla v \|^p_{p-\delta  } d\delta
\end{split}
\end{equation}
On the other hand, since $  0   \leqslant \vartheta \leqslant 1$ we have
\begin{equation}\label{4.3}
I_2 \leqslant  \int_0^{  \varepsilon_p(n)} \varepsilon ^ {\alpha-1} \|  \nabla v \|^p_{p-\varepsilon p} d\varepsilon
=
\frac 1 {p^\alpha}
  \int_0^{ p \varepsilon_p(n)} \delta ^ {\alpha-1} \|  \nabla v \|^p_{p-\delta} d\delta
\end{equation}
Similarly,
\begin{equation}\label{4.4}
I_3 \leqslant \left(   \vartheta \varepsilon_p(n)  \right)^{\frac p {p-2}}  \int_0^{ \theta \varepsilon_p(n)} \varepsilon ^ {\alpha-1} \|   |\nabla u|+|\nabla v| \|^p_{p-\varepsilon p} d\varepsilon
=
C(n,p,\alpha)
  \int_0^{ p \varepsilon_p(n)} \delta ^ {\alpha-1}  \left\| |\nabla u| + |\nabla v| \right\|^p_{p-\delta} d\delta
\end{equation}
Combining 
\eqref{4.2}, \eqref{4.3} and \eqref{4.4} with 
\eqref{4.1}
we have
\begin{equation}\label{4.5}
\vartheta^\alpha 
\|\nabla u - \nabla v \|^p_{\leb^p\log^{-\alpha}\leb}
\leqslant
C
\left\{
\left(K_A-1\right)^{\frac p {p-1}}
\|  \nabla v \|^p_{\leb^p\log^{-\alpha}\leb}
+
\vartheta^{\frac p {p-2}} \left\| |\nabla u| + |\nabla v| \right\|^p_{\leb^p\log^{-\alpha}\leb}
\right\}
\end{equation}
Now, we pick $\vartheta$ in such a way that 
\[
\vartheta^{\frac p {p-2}} = \left( \frac {K_A-1} {K_A} \right)^{\frac p {p-1}} 
\]
Hence, 
\eqref{4.5}
may be rewritten as
\begin{equation}\label{4.6}
\begin{split}
\|\nabla u - \nabla v \|^p_{\leb^p\log^{-\alpha}\leb}
\leqslant
C \left(K_A-1\right)^{\frac p {p-1} - \alpha \frac {p-2} {p-1} }
K_A^{\frac {\alpha(p-2)} {p-1} }
&
\biggr\{
K_A^{\frac p {p-1} }
\|  \nabla v \|^p_{\leb^p\log^{-\alpha}\leb}
\\
&
+
\left\| |\nabla u| + |\nabla v| \right\|^p_{\leb^p\log^{-\alpha}\leb}
\biggr\}
\end{split}
\end{equation}
Finally,   \eqref{4.7} is proved.
\end{proof}

\vskip 0.7cm
{
\tabskip 0pt plus \hsize
\halign to \hsize{\hfil#\tabskip 0pt \cr
Dipartimento di Matematica e Applicazioni ``R.\ Caccioppoli''\cr
Universit\`a degli Studi di Napoli ``Federico II''\cr
Via Cintia -- 80126 {\scshape Napoli}\cr
fernando.farroni@unina.it\cr
luigreco@unina.it\cr
gmoscari@unina.it\cr}}
\end{document}